\theoremstyle{plain}
\newtheorem{theorem}{Theorem}[section]
\newtheorem{prop}[theorem]{Proposition}
\newtheorem{lemma}[theorem]{Lemma}
\theoremstyle{definition}
\theoremstyle{remark}
\newtheorem{rem}{Remark}[section]
\newcommand{\ba}{\begin{eqnarray}}
\newcommand{\be}{\begin{equation}}
\newcommand{\ea}{\end{eqnarray}}
\newcommand{\ee}{\end{equation}}
\newcommand{\benn}{\begin{equation*}}
\newcommand{\eenn}{\end{equation*}}
\def\ve{\varepsilon}
\numberwithin{equation}{section}
\title{Localised modes due to defects in high contrast periodic
media via two-scale homogenization}
\author{I.V.
Kamotski$^{1}$ and V. P. Smyshlyaev$^{1,2}$}
\begin{document}

\maketitle

\bigskip

{ \small \noindent
$^1$ Department of Mathematics, University College London, Gower Street, London WC1E 6BT, UK.}\\

{ \small \noindent $^2$ Corresponding author:  e-mail:
  v.smyshlyaev@ucl.ac.uk}

\begin{abstract}
The spectral problem for an infinite periodic medium perturbed by a compact defect is considered.  
For a high contrast small $\ve$-size periodicity and a finite size defect we consider the critical
$\ve^2$-scaling for the contrast. We employ 
(high contrast) two-scale homogenization for deriving asymptotically explicit limit equations
for the localised modes (exponentially decaying eigenfunctions) and associated
eigenvalues. Those are expressed in terms of  the eigenvalues and eigenfunctions of a perturbed version of a
two-scale limit operator introduced by V.V. Zhikov with an emergent explicit nonlinear 
dependence on the spectral 
parameter for the spectral problem at the macroscale. Using the method of asymptotic expansions
supplemented by a high contrast boundary layer analysis we establish the
existence of the actual eigenvalues near the eigenvalues of the limit operator, with 
``$ \,\ve$ square root'' 
error bounds. 
An example  for circular or 
spherical defects in a periodic medium with isotropic homogenized properties is given and 
displays explicit limit eigenvalues and eigenfunctions. Further results on improved error bounds 
for the eigenfunctions are discussed, by combining our results with those of M. Cherdantsev ({\it Mathematika. 
2009;55:29--57}) based on the technique of strong two-scale resolvent convergence and associated 
two-scale compactness properties.
\end{abstract}

{\bf  Keywords:} localised modes, defects in periodic media, high-contrast homogenization, 
eigenvalue problem, boundary layer analysis, error bounds

\newpage
\section{Introduction}

We dedicate this work to memory of Professor V.V. Zhikov. Among many important contributions of 
V.V. Zhikov are development of two-scale homogenization techniques for high contrast spectral problems,  
e.g. \cite{Zh1,Zh2,ZhP13}, and of error estimates in homogenization theory, e.g. \cite{JKO,Zh05,ZhP13}. 
The present work, whose preliminary version was completed in 2006, relates to the both topics and was greatly influenced by Zhikov then 
and has had a continued influence by his ideas since. 

Studying the spectral properties of operators with periodic coefficients, with and without
defects, has received considerable attention in the mathematical literature, see
e.g.  review \cite{K}. Some additional recent interest is motivated by problems in physics
associated with photonic or phononic crystals and photonic crystal fibers, 
see e.g. \cite{Jannop}, \cite{Russel}.
A photonic crystal fiber (PCF) for example represents geometrically a periodic medium (whose 
physical properties vary across the fiber but not along it), with
the defect being its ``core'', which is a propagating channel or a waveguide: electromagnetic 
waves of certain frequencies (the band gap frequencies) fail to propagate in the surrounding 
periodic medium and
hence remain localised inside the PCF, which allows for them to propagate along the core 
for long distances with little loss \cite{Russel}. Mathematically, the problem reduces
to an appropriate spectral problem at the cross-section of the PCF, cf. Figure \ref{fig1}. This is that
of characterisation of localised modes or eigenstates (whenever such exist) in the band gaps in the
Floquet-Bloch spectrum for the Maxwell's operator in the surrounding periodic medium with a fixed 
``propagation constant'' (the wave vector along the fiber). The latter cross-sectional geometry
is a periodic medium perturbed by a finite size heterogeneity (domain $\Omega_2$ in
Fig. \ref{fig1}). The problem is hence first in detecting the band gaps in the periodic medium without
defects and then in finding, in the presence of a defect, the extra point spectrum in the gaps as
well as the associated eigenfunctions, the localised states. In the present work we aim at detecting 
such localised modes in an asymptotically explicit form due to defects in high contrast periodic 
medium using the tools of (high contrast) homogenization theory. In physical terms, this corresponds to a 
simplified model with scalar rather than Maxwell's equations and with in effect zero propagation 
constant. We expect that this nevertheless captures the essence of the underlying effects, making 
thereby the proposed methodology more transparent and avoiding at the same time additional technical 
complications. For an asymptotic analysis of a full three-dimensional Maxwell PCF (although with a 
moderate contrast) see \cite{CKS14}. 
In any case, we expect the problem and the methods we develop here to be of mathematical interest, 
in particular in part of obtaining error bounds for high-contrast homogenization problems in the 
presence of a boundary layer due to the defect. 

Considerable literature is devoted to problems from the above described general class. Apart from 
numerous computational approaches (e.g., \cite{Bird3}), 
most of the mathematical treatments have been qualitative, establishing the existence of the band
gaps, of the point spectrum in the gaps in the presence of defects, some bounds on the number of the 
eigenvalues in the gap, on the pattern of the (exponential) decay of the eigenmodes, etc, see 
\cite{K} and further
references therein. If however the problem contains one or more small parameters, e.g. {\it high contrast}, 
often in the presence of other small parameters, e.g. thickness of thin periodic (high contrast) structures,
the asymptotic methods become potentially applicable for more explicit answers to the above questions. 
In mathematical literature, various results on the existence and the asymptotic 
description of the band gaps in high contrast periodic media have been obtained on this way,  e.g. 
\cite{
FigKunKuchm2, HL}. 
In particular, methods of homogenization theory,
including those of high contrast homogenization, have proven to be particularly fruitful for problems
from the above general class, see e.g. \cite{Zh1, Zh2, 
Bouchitte, 
GW}.

Hempel and Lienau \cite{HL} studied the spectral problem for a matrix-inclusion high contrast
periodic medium and established  asymptotically explicit band gaps using min-max
variational methods. Zhikov \cite{Zh1, Zh2}  has independently used for this case techniques
of high-contrast homogenization of ``double-porosity'' type. Related periodic medium
has  periodicity cell size $\varepsilon$ and the contrast between
the ``inclusion'' and the ``matrix'' of order $\varepsilon^2$
($\varepsilon$ is small), which is equivalent to the scaling of
\cite{HL}.  As a result the spectrum converges in the
sense of Hausdorff to an explicitly described limit 
spectrum which contains gaps. Zhikov, using the techniques of
two-scale convergence \cite{Ng, Al}, has made significant further advances having
additionally described an associated
(two-scale) limit operator and clarified further the convergence of the
spectra in terms of the strong two-scale resolvent convergence, the associated
convergence of the spectral projectors and certain additional
compactness properties. Zhikov has also shown that at the ``macroscale'' the spectral 
problem displays an emergent explicit nonlinear dependence on the spectral 
parameter, see \eqref{eq:eq5}, \eqref{eq:eq6} below.

In this paper, using the (high contrast) homogenization theory methods, we show that if such a
rapidly oscillating high-contrast medium is perturbed by a compact defect 
of size of order one,
asymptotically explicit eigenvalues and eigenfunctions can emerge
in the gap when $\varepsilon\to 0$. The essential spectrum is
known to remain unchanged under such a perturbation \cite{Birm,
AA, FK1}, and the existence of the point spectrum in the gaps of
the unperturbed operator has also been established on some
accounts together with some estimates on the number of eigenvalues
in the gap, see \cite{AA, FK1} and further references in
review \cite{K}. We argue that the homogenization techniques allow to
substantially refine this information, providing an explicit
asymptotic description and tight bounds on the (convergent)
eigenvalues and eigenfunctions, and ultimately on their number via
some kind of ``asymptotic completeness'' of the spectrum in the
gap as described by an explicit limit operator.

We employ in this work the method of asymptotic expansions supplemented
by its rigorous justification, in the case of regular boundaries for both
the periodic inclusions and the defect. This allows to obtain not only an
explicit description of the limit equations and the convergence 
results, but also to establish {\it the rate} of convergence: the main technical result 
of this work is
the error estimate \eqref{mainresult}, with the ``$\ve$-square-root'' bound
being typical for classical homogenization with boundary-layer effects,
see e.g. \cite{JKO} and further references therein. The method of asymptotic
expansions in the ``moderate contrast'' classical homogenization as well as its
rigorous justification are well developed, see e.g. \cite{BLP, SanPal,
BaPan, JKO}. Applications of asymptotic
methods for high contrast 
homogenization can be
found e.g. in \cite{Sandr}. On the other hand, error estimates in homogenization
can be obtained by other methods, see e.g. recent review \cite{ZhP16}, including the so called spectral method, e.g.
\cite{Beliaev, Zh3, AConca, BirmSusl, Zh4}, as well as by modifications of the
asymptotic expansions method with no assumptions on the regularity of the
coefficients, e.g. \cite{Zh05}. One novel technical ingredient in the present
work is perhaps the execution of a delicate boundary layer
asymptotic analysis in the high contrast case 
(Section 5). 

Cherdantsev \cite{cherd} considered essentially the same problem  
via the alternative method of two-scale convergence, following the general ideas of Zhikov \cite{Zh1,Zh2}. 
With an additional restriction on the 
boundary-layer inclusions, he established not only strong two-scale resolvent convergence but also associated compactness properties via some novel technical analysis of an $\ve$-uniform exponential decay of the 
eigenfunctions in the gap. As a result he obtained stronger results on the ``full'' Hausdorff spectral convergence, although without the error estimates. Since our approach and that of \cite{cherd} are essentially 
independent and the results complement each other, it is natural to combine them as we also discuss in the present paper. 

The structure of the paper is the following.
We first formulate the problem (Section 2), then give an explicit description
of the limit problem in Section 3. Section 4 establishes the
convergence and error bounds for the eigenvalues and the
eigenfunctions (Theorem \ref{thm1}). 
We give full proof of
Theorem \ref{thm1}, whose most technical part (Theorem \ref{l:estim}, including a number
of accompanying technical lemmas and propositions) is given in the Section 6.
An explicit example illustrating the 
existence of the point spectrum for the limit operator for
spherical defects in a surrounding periodic medium with isotropic effective
properties is given in Section 6. Finally we discuss the improved error estimates on 
the eigenfunctions 
by combining our results with those of Cherdantsev \cite{cherd} in  
Section 7. Appendix A gives formal
derivation of the limit problem, as well as of the 
correctors required for the rigorous justification.

\section{Formulation of the problem}

We consider a high contrast two-phase periodic medium with a small periodicity size and with
a ``finite size'' defect filled by a third phase.
The geometric configuration is displayed on Figure \ref{fig1}.

\begin{figure}[ht]

\centerline{\epsfig{file=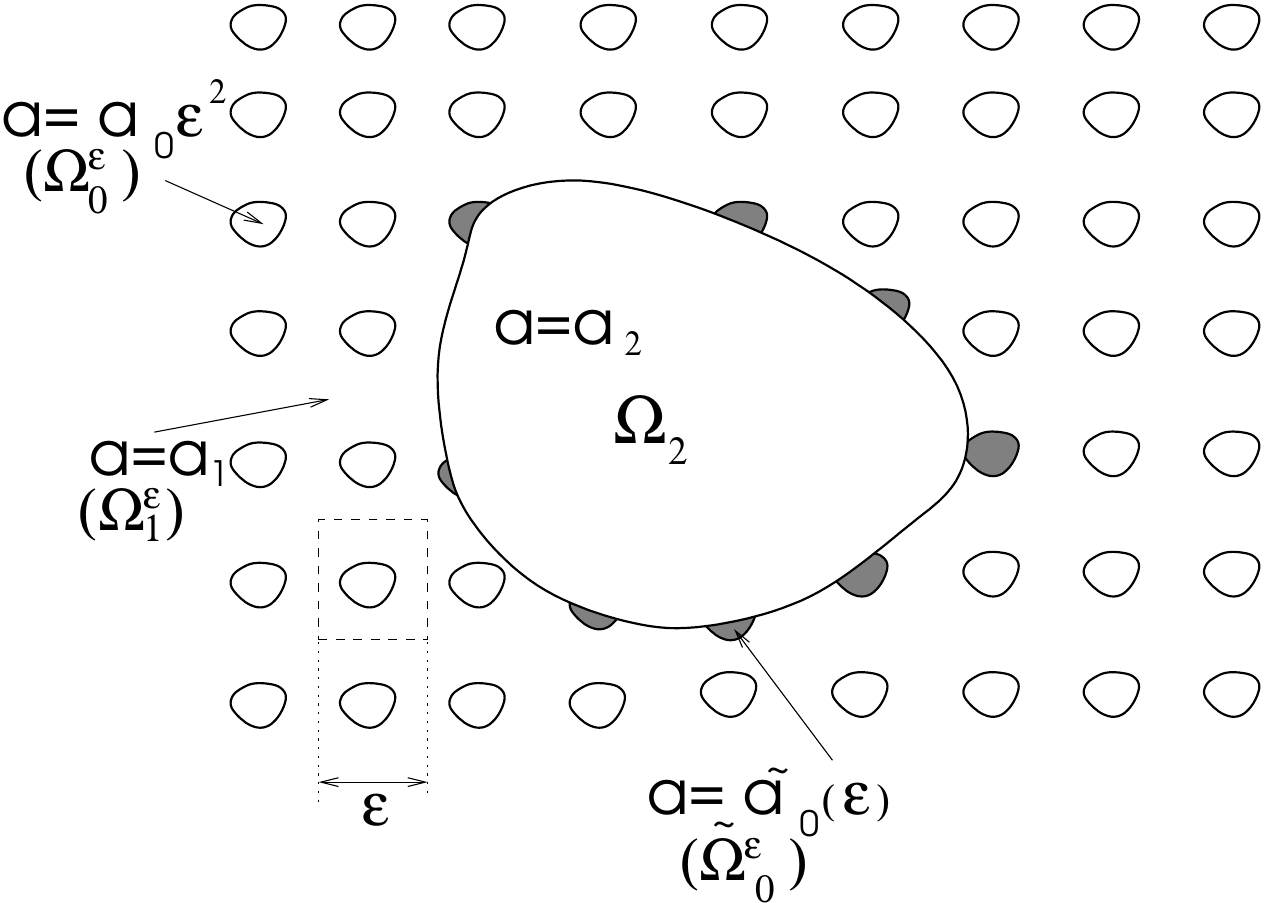}}
\caption{Geometric configuration: a defect in a rapidly oscillating high contrast medium} \label{fig1} \end{figure} 

The precise mathematical formulation is the following. Let $Q:=[0,1)^n$ be the reference
periodicity cell in $\mathbb{R}^n$, $n\geq 2$, and let $Q_0\subset Q$ be a 
domain (a
``reference inclusion'') in $Q$,  
with infinitely smooth boundary $\partial Q_0$, 
$\overline{Q_0}\subset Q$ (the overbar denotes the closure of the set). 
Denote by  $Q_1$ the complement of $Q_0$ in $Q$, 
$Q_1:=  Q\backslash \overline{Q_0}$.
 Let $\hat Q_0^\varepsilon$ be the corresponding
contracted set, i.e.  $\hat Q_0^\varepsilon := \{x:
x/\varepsilon \in Q_0\}$, where $\ve>0$ is a small positive parameter.
We denote by $Q_0^\varepsilon$ the
$\varepsilon$-periodic cloning of $\hat Q_0^\varepsilon$,
i.e. $Q_0^\varepsilon:=\hat Q_0^\varepsilon+\ve\mathbb Z^n$.
Let the ``defect domain'' $\Omega_2$  be an $\varepsilon$-independent bounded domain
with infinitely smooth boundary. We denote by
$\tilde Q_0^\ve$ the set of all the inclusions in $Q_0^\varepsilon$ which
 intersect with the boundary $\partial\Omega_2$ of $\Omega_2$, and by
$\tilde\Omega_0^\varepsilon$ the
union of all the parts from $\tilde Q_0^\varepsilon$ outside
 $\Omega_2$, {\it i.e.}
$\tilde\Omega_0^\varepsilon:=\tilde Q_0^\varepsilon\backslash\overline{{\Omega_2}}$,
see Figure \ref{fig1}.

One phase, the ``inclusions phase'' of the resulting composite medium,
denoted $\Omega_0^\varepsilon$, is the collection of all the small inclusions lying
entirely outside the defect $\Omega_2$, i.e.
 $\Omega_0^\varepsilon:=Q_0^\varepsilon
\backslash \left(\Omega_2\cup\tilde Q_0^\varepsilon\right)$.
The ``matrix phase'',
denoted $\Omega_1^\varepsilon$, is the complement to the inclusions outside
the defect, i.e.
$\Omega_1^\varepsilon:= \mathbb{R}^n\backslash \overline{\left(Q_0^\varepsilon
\cup \tilde Q_0^\varepsilon \cup \Omega_2\right)}$.

We assume that the matrix and the defect are filled with materials with
$\ve$-independent uniform properties $a_1$ and $a_2$ respectively, and the inclusions 
are filled with materials with $\ve$-dependent (uniform) properties: $a_0(\ve)$ and
$\tilde a_0(\ve)$ for the ``full'' (domain $\Omega_0^\varepsilon$) and the ``cut''
(domain $\tilde\Omega_0^\varepsilon$)) inclusions, respectively. Mathematically,
for every positive (small enough) $\ve$ we consider the spectral problem
\begin{equation}
   A_\varepsilon u^\varepsilon\,=\,\lambda(\varepsilon)
    u^\varepsilon
      \label{eq:eq1}
\end{equation}
for operator $A_\varepsilon$, self-adjoint in $L^2\left(\mathbb{R}^n\right)$ with a 
domain $D(A_\ve)$ dense in $H^1({\mathbb R}^n)$,
\begin{equation}
   A_\varepsilon u^\varepsilon\,:=\, -\nabla\cdot \biggl(a(x,\varepsilon) \nabla
u^\varepsilon(x)\biggr),\ \  x\in  \mathbb{R}^n,
      \label{eq:eq1new}
\end{equation}
where
\begin{equation}
a(x,\varepsilon)=
    \left\{
      \begin{array}{ll}
         a_0(\varepsilon), & x\in \Omega_0^\varepsilon,
          \\
         \tilde a_0(\varepsilon), & x\in \tilde\Omega_0^\varepsilon,
          \\
          a_1, & x\in \Omega_1^\varepsilon,
          \\
         a_2, & x\in \Omega_2.
      \end{array} \right.
      \label{eq:eq2}
\end{equation}

We assume that
\begin{equation}
   a_0(\varepsilon)=a_0\varepsilon^2
      \label{eq:eq3}
\end{equation}
(which is sometimes called a double porosity-type scaling), with $a_0$, as
well as $a_1$ and $a_2$ being arbitrary positive constants. There is
a degree of freedom in this work for selecting the scaling for
$\tilde a_0(\varepsilon)$ in the ''boundary layer'', so we only
require that
 \be
    0<\,\tilde
    a_0(\varepsilon)\,\leq\,\tilde A_0 \label{tildea0bounds}
 \ee
with some $\ve$-independent positive  $\tilde A_0$. Hence the
results of the present paper remain valid, for example, both for the
double porosity scaling where $\tilde a_0(\varepsilon)=\tilde
a_0\varepsilon^2$ with some $\ve$-independent positive $\tilde a_0$,
in particular for $\tilde a_0=a_0$ (physically, the ``cut''
inclusions outside the defect being kept), as well for $\tilde
a_0(\varepsilon)$ being ``of order one'', e.g. $\tilde
a_0(\varepsilon)=A_0$, in particular for $A_0=a_1$ or $A_0=a_2$ (the
cut inclusions being replaced by the matrix or defect material).
We also remark that the results presented in this paper will remain equally
valid for the boundary inclusions $\tilde Q_0^\ve$ also cutting
out of the ``defect'' part $\Omega_2$ (the maximal generality
has not been pursued to avoid unnecessary further technical complications).
Emphasize however that for subsequent stronger results on the
Hausdorff convergence of the spectra and the convergence of the
eigenfunctions (see \cite{cherd} and  Section 7 below) a more restrictive 
(essentially ``order one'')  
choice for $\tilde a_0(\varepsilon)$ becomes necessary, to exclude additional 
modes which might otherwise exist near the defect's boundary.

The equation (\ref{eq:eq1}) is understood in the usual weak sense,
implying the continuity of $u^\varepsilon$ and of the conormal
derivatives at the boundaries of $\Omega_0^\varepsilon,
\Omega_1^\varepsilon, \Omega_2$ and $\tilde\Omega_0^\varepsilon$.

Hence for every fixed positive $\ve$ (\ref{eq:eq1})--(\ref{eq:eq1new})
represents a spectral problem for an operator with periodic
coefficients ``perturbed'' by a localised defect.
We are mostly interested in the existence and asymptotics for the
eigenvalues $\lambda(\ve)$ and the associated localised solutions
(eigenfunctions) $u^\ve(x)$ of (\ref{eq:eq1}) when $\varepsilon \to 0$.

\section{Homogenization and the limit problem}

We describe in this section the formal asymptotic procedure for solving
(\ref{eq:eq1})--(\ref{eq:eq1new}) when $\ve\to 0$. It is rigorously
justified in the subsequent sections.

One can seek a formal solution to the spectral problem
(\ref{eq:eq1})--(\ref{eq:eq1new}) in the form of a standard two-scale ansatz:
\begin{eqnarray}
u^\varepsilon(x)&=&u^{(0)}(x,x/\varepsilon)+\varepsilon
u^{(1)}(x,x/\varepsilon)+\varepsilon^2
u^{(2)}(x,x/\varepsilon)\, +\,r^\varepsilon(x),
\label{ansatz1}\\
\lambda(\varepsilon)&=&\lambda_0+o(1),
\label{ansatz2}
\end{eqnarray}
where $u^{(0)}(x,y),$  $u^{(1)}(x,y)$ and $u^{(2)}(x,y)$ are functions to be
determined which are $Q$-periodic in $y$, the remainders $r^\varepsilon(x)$
and $o(1)$ are expected to be small when $\varepsilon \to 0$, with $\lambda_0$
and $u^{(0)}(x,y)$ subsequently having the meaning of the eigenvalues
and eigenfunctions of a ``limit problem''. [We subsequently show that the 
remainder $o(1)$ in \eqref{ansatz2} is in fact ``of order $\ve^{1/2}$'', i.e. 
$O(\ve^{1/2})$, see \eqref{mainresult}.]

A formal substitution of (\ref{ansatz1})-(\ref{ansatz2}) into
(\ref{eq:eq1}) results upon straightforward calculation in the
following structure of the main-order term $u^{(0)}(x,y)$, see
Appendix A:
\begin{equation}
u^{(0)}(x,y)=
    \left\{
      \begin{array}{ll}
         u_0(x), & x\in \Omega_2\,\,\mbox{ or }\,\, x\in\mathbb{R}^n \backslash \Omega_2, \, y\notin Q_0 ,
          \\
          u_0(x)+v(x,y), & x\in
\mathbb{R}^n \backslash \Omega_2, \, y\in Q_0,
      \end{array} \right.
      \label{u0}
\end{equation}
which highlights the fact that $u^{(0)}$ varies only at the ``slow''
scale $x$ (as is the case in the classical, i.e. not
high-contrast, homogenization) everywhere outside the domain of
``soft'' inclusions $\Omega_0^\varepsilon$, however may depend on
the fast variable $y=x/\varepsilon$ in $\Omega_0^\varepsilon$.
Further, the pair of functions $(u_0(x), v(x,y))$ must solve the
following limit coupled  spectral problem:
\begin{eqnarray}
-\nabla\cdot a_2 \nabla u_0(x)&=&\lambda_0
    u_0(x),\ \  x\in  \Omega_2,
\label{330}\\
-\nabla\cdot A^{\rm hom} \nabla u_0(x)&=& \lambda_0\left(u_0+\langle
v\rangle_y \right),\ \ x\in  \mathbb{R}^n\backslash \Omega_2,
\label{331}\\ 
-\,a_0\Delta_yv &=& \lambda_0\left(u_0\,+\,v\right), \ \ y\in
Q_0; \,\, v=0,\,y\in \partial Q_0 \,\, \left( x\in
\mathbb{R}^n\backslash \Omega_2\right), \label{332}\\
\left(u_0\right)_-&=&\left(u_0\right)_+, \,\, a_2\left(\frac{\partial
u_0}{ \partial n}\right)_-= \left(A_{ij}^{\rm hom}\frac{\partial
u_0}{\partial x_j}n_i\right)_+, \, x\in\,\partial \Omega_2. 
\label{limsystem}
\end{eqnarray}
Here
\begin{equation}
\langle v\rangle_y(x)\,:=\,|Q|^{-1}\int_{Q}v(x,y)\, dy
\label{yaverage} \end{equation} denotes the averaging with respect
to $y$ over the periodicity cell $Q$ (extending $v$ by zero outside
$Q_0$); 
 $\Delta_y$ is the Laplace
operator, 
$(\cdot)_-$ and $(\cdot)_+$ denote respectively the interior
and exterior limit values  of the appropriate entities at the
boundary $\partial \Omega_2$ of $\Omega_2$, $n$ is the interior unit
normal to $\partial \Omega_2$, 
$\partial/\partial n:=n\cdot\nabla$ is the normal derivative, 
summation is henceforth implied with
respect to repeated indices. In (\ref{331})
$A^{\rm hom}=\left(A^{\rm hom}_{ij}\right)$ is the standard 
``soft inclusions'' (or perforated domain)
homogenized matrix  for the above described periodic medium with
$a_0=0$, see e.g. \cite{JKO} \S3.1:
\begin{equation}
A^{\rm hom}_{ij}\xi_i\xi_j=\inf_{w\in C^\infty_{\rm per}(Q)}
\int_{Q \backslash Q_0}a_1\left\vert\xi+\nabla w\right\vert^2\,dy\,\,\,\,\left(\xi \in
\mathbb R^n\right), 
\label{eq:ahom}
\end{equation}
where $C^\infty_{per}(Q)$ denotes infinitely smooth $Q$-periodic functions. 

Notice in passing that, following the pattern of Zhikov \cite{Zh2}, the above limit problem 
(\ref{330})--(\ref{limsystem}) can be interpreted\footnote{See some further discussion in
Section 7 on 
the
limit operator.} as a
spectral problem for a two-scale limit non-negative self-adjoint operator (which we will denote
$A_0$) acting in the following Hilbert space ${\cal H}_0$:
\begin{eqnarray}
{\cal H}_0=\biggl\{ u(x,y)\in L^2\left(\mathbb{R}^n\times Q\right)
&\biggl\vert& u(x,y)=u_0(x)+v(x,y), \, \ \ u_0\in
L^2\left(\mathbb{R}^n\right),  \biggr.\biggr.
\nonumber\\
&&\biggl.
v\in\,L^2\biggl(\mathbb{R}^n\times Q 
\biggr)
;\, 
v(x,y)=0 \mbox{ if } y\in Q_1 \mbox{ or } x\in\Omega_2 \biggr\}. 
\nonumber\\ 
\label{Hspace}
\end{eqnarray}
The operator $A_0$ is  generated by the (closed) symmetric and
non-negative quadratic form $B_0$ defined on dense in ${\cal H}_0$ domain 
\begin{eqnarray}
{\cal V}=\biggl\{ u(x,y)\in {\cal H}_0 
&\biggl\vert& u(x,y)=u_0(x)+v(x,y), \, \ \  u_0\in
H^1\left(\mathbb{R}^n\right),  \biggr.\biggr.
\nonumber\\
&&\biggl.
v\in\,L^2\biggl(\mathbb{R}^n; H^1_{per}(Q)
\biggr)
;\, 
v(x,y)=0 \mbox{ if } y\in Q_1 \mbox{ or } x\in\Omega_2 \biggr\}, 
\nonumber\\ 
 \label{Qdomain}
\end{eqnarray}
where $ H^1_{per}(Q)$ stands for the closure in  $H^1(Q)$ of $C_{per}^\infty(Q)$. 
The limit two-scale form $B_0$ is then 
defined as follows: for $u(x,y)=u_0(x)+v(x,y)\in\,{\cal V}$ and $w(x,y)=w_0(x)+z(x,y)\in\,{\cal V}$,
\begin{equation}
B_0(u,w)\,=\,\int_{\Omega_2}a_2\nabla u_0\cdot\nabla w_0 \,dx+
\int_{\mathbb{R}^n\backslash\Omega_2}A^{\rm hom}\nabla u_0\cdot\nabla
w_0\,dx\,+\, \int_{\mathbb{R}^n\backslash\Omega_2}\int_{Q_0}
a_0\nabla_y v\cdot\nabla_y z\, dy\,dx. \label{Qform}
\end{equation}
The resulting (self-adjoint) operator $A_0$ is defined in standard way
on a dense domain $D(A_0) \subset {\cal V}$. 
The limit spectral problem (\ref{330})--(\ref{limsystem}) can then 
be equivalently stated as follows: find $u=u_0+v\in {\cal V}$, $u\neq 0$, and $\lambda_0\geq 0$ such that 
\[
B_0(u,w)\,=\,\lambda_0 (u,w)_{{\cal H}_0}, \,\,\, \mbox{for all } w\in {\cal V}, 
\]
where $(u,w)_{{\cal H}_0}$ is the inner product in ${\cal H}_0$ taken as the standard inner product in 
$L^2({\mathbb R}^n\times Q)$. 
(The latter construction of $A_0$ is not of an extensive 
use in the present paper and hence is not elaborated upon here.)

On the other hand, the limit problem (\ref{330})--(\ref{limsystem})
can be re-arranged as follows, cf. \cite{Zh1, Zh2}. 
Assume 
$\lambda_0\neq \lambda_j$ for all $j\geq 1$, and let $\lambda_j$ and 
$\varphi_j(y)$ be the eigenvalues and the
(normalized) eigenfunctions respectively of $-a_0\Delta_y$ 
 in $Q_0$ with Dirichlet boundary conditions
on $\partial Q_0$. 
Boundary value problem \eqref{332} 
 implies that $v(x,y)$ can be uniquely presented as
 \be
    v(x,y)=u_0(x)V(y),
\label{Vy}
 \ee
where $V$ is a solution of the problem
 \be
    -a_0\triangle_y V=\lambda_0 V+\lambda_0,\  y\in Q_0;
    \  V=0,\ y\in \partial Q_0.
 \label{eq:V}
 \ee
It is further assumed that $V$ is extended by zero to $Q$ and is then
periodically extended on the whole $\mathbb{R}^n$. 
Then, substituting (\ref{Vy}) back into (\ref{331}) we arrive
at the following spectral problem for $u_0$ with a nonlinear dependence on the
spectral parameter $\lambda$:
\begin{equation}
    -\nabla\cdot a_2 \nabla u_0(x)=\lambda_0
    u_0(x),\ \  x\in  \Omega_2,
      \label{eq:eq4}
\end{equation}
\begin{equation}
    -\nabla\cdot A^{\rm hom} \nabla u_0(x)=\beta(\lambda_0)
    u_0(x),\ \  x\in  \mathbb{R}^n\backslash \Omega_2, 
      \label{eq:eq5}
\end{equation}
\begin{equation}
\left(u_0\right)_-\,=\,\left(u_0\right)_+, \,\, a_2\left(\frac{\partial
u_0}{ \partial n}\right)_-= \left(A_{ij}^{\rm hom}\frac{\partial
u_0}{\partial x_j}n_i\right)_+, \, x\in\,\partial \Omega_2.
\label{316}
\end{equation}
Here
\begin{equation}
   \beta(\lambda):=\,\lambda\,+\,\lambda\langle V\rangle
	\label{betadef}
	\ee
is the function introduced by Zhikov . 
Applying the spectral decomposition to (\ref{eq:V}),  cf. \cite{Zh1, Zh2}, 
\begin{equation}
V(y)\,=\,\lambda_0
\,\sum_{j=1}^\infty\frac{\langle\varphi_j\rangle_y}
{\lambda_j-\lambda_0}\varphi_j(y), \label{specdecomv}
\end{equation}
and substituting the latter into \eqref{betadef} results in 
\be 
\beta(\lambda)\,=\,\lambda\,+\,
\lambda^2\sum_{j:\,
\langle\varphi_j\rangle_y\neq 0}
\frac{\langle\varphi_j\rangle_y^2}{\lambda_j-\lambda},
      \label{eq:eq6}
\end{equation}
which is a sign-changing function, also derived in \cite{Zh1, Zh2}, singular near the inclusions' eigenvalues $\lambda_j$, 
$j=1,2,...$, 
 see Figure 2. 
(Strictly speaking, $\lambda_1$, $\lambda_2$, $\lambda_3$, ... on Fig. 2 
denote only  the eigenvalues corresponding to $\varphi_j$ with 
non-zero mean, ordered by increasing values.) 
\begin{figure}[ht]
\centerline{\epsfig{file=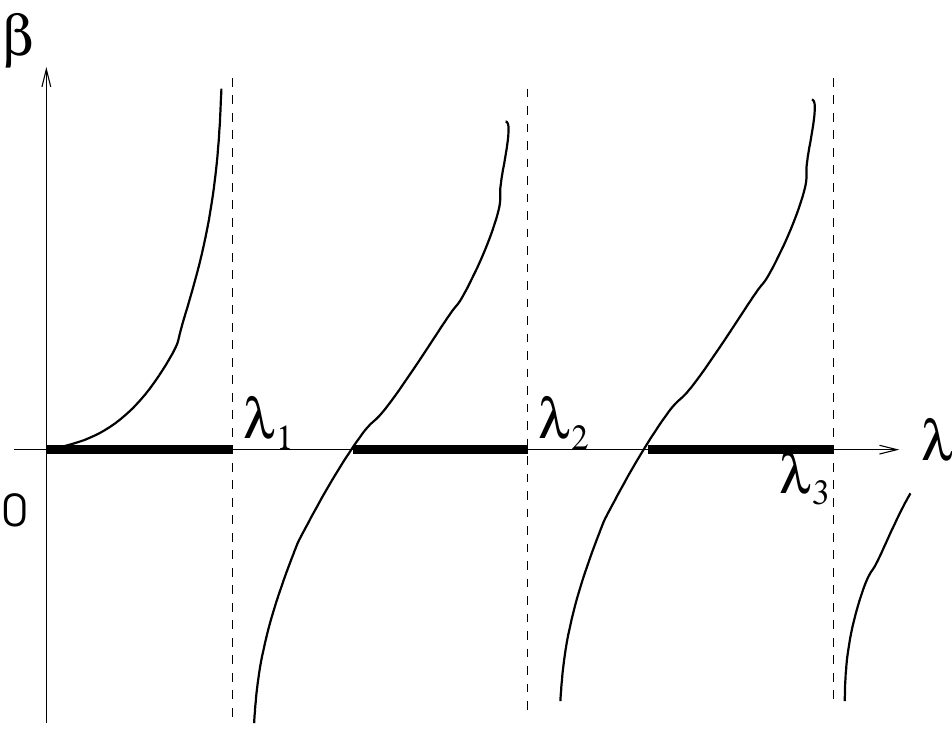}} \caption{Function
$\beta(\lambda)$} \label{fig2}
\end{figure}

The spectrum of the ``unperturbed'' limit operator (i.e. with no defects)
consists of the union of $\lambda$ where $\beta(\lambda)\geq 0$ and
of $\lambda_j$, $j=1,2,...$, \cite{Zh2}. If
$\langle\varphi_j\rangle_y=0$ the spectrum of the limit operator
also contains (infinite multiplicity) point spectrum at
$\lambda=\lambda_j$. Hence the limit operator has gaps when
$\beta(\lambda)<0$, $\lambda\neq \lambda_j, j=1,2,...$ .

In this work, for the perturbed linear operator $A_0$, restricting ourselves
only to values of $\lambda$ in the gaps of the
unperturbed limit operator, we define $\lambda_0$ to be an
eigenvalue of $A_0$ if $\beta(\lambda_0)<0$,
$\lambda_0\neq \lambda_j$, $j\geq 1$, and the system
(\ref{eq:eq4})--(\ref{eq:eq6}) admits a solution $u_0(x)$
decaying at infinity (hence, decaying exponentially since
$\beta(\lambda_0)<0$). We define as an eigenfunction (or an
eigenvector) of $A_0$ the pair of functions $(u_0,v)$, where
$u_0(x)$ is the above solution and $v(x,y)$ is related to
$u_0(x)$ via (\ref{332}) (equivalently via \eqref{Vy} and \eqref{eq:V}, (\ref{specdecomv}))
for $y\in Q_0$ and $x\in \mathbb{R}^n\backslash \Omega_2$
and extended by zero for the remaining values of $x$ and
$y\in Q$.

One can show from (\ref{eq:eq4})--(\ref{316})
 that the perturbed limit operator $A_0$ inside the
gaps of the unperturbed operator can only develop isolated
eigenvalues of finite multiplicity, see \cite{cherd} \S7.
As we show in Section 6, the problem (\ref{eq:eq4})--\eqref{316} admits in
some cases an explicit calculation of its eigenvalues and
eigenfunctions.

\section{Convergence and error bounds for eigenvalues and eigenmodes}

The main result of the present work is the following theorem
establishing the closeness of the spectrum of the
original operator $A_\varepsilon$ to  the spectrum of the above described
limit operator $A_0$ as $\varepsilon\to 0$:
\begin{theorem}\label{t:main}
\label{thm1} Let $\lambda_0$ be an eigenvalue of limit operator $A_0$,
 $\beta(\lambda_0)<0$,
$\lambda_0\neq\lambda_j, j\geq 1$. Then there exists $\ve_0>0$ and a
constant $C_1>0$ independent of $\varepsilon$  such that
for any $\,0<\ve\leq \ve_0$ there exists an isolated
eigenvalue $\lambda(\varepsilon)$ of operator $A_\varepsilon$ of finite
multiplicity, such that
\be
\left|\lambda(\varepsilon)-\lambda_0\right|\,\leq\,C_1\varepsilon^{1/2}.
\label{mainresult}
\end{equation}

Moreover if $(u_0,v)$ is an eigenfunction of $A_0$ which corresponds to $\lambda_0$ then
the function \begin{equation} u^{\rm appr}(x,\varepsilon):=
    \left\{
      \begin{array}{ll}
         u_0(x)+v(x,x/\varepsilon), & x\in \Omega_0^\varepsilon,
          \\
          u_0(x), & x\in \Omega_1^\varepsilon \cup \Omega_2 \cup
\tilde\Omega_0^\varepsilon,
      \end{array} \right.
      \label{eq:eq27}
\end{equation}
is an approximate eigenfunction for $A_\varepsilon$ at least in the
following sense\footnote{See \cite{cherd} and the discussion in Section 7 on
strengthening, under an additional restriction, of this result on convergence of the eigenfunctions.}:
there exist constants $c_j(\varepsilon)$ such that
\begin{equation}
\left\| u^{\rm appr}-\sum_{j\in J_\varepsilon}
c_j(\varepsilon)u_j^\varepsilon\right\|_{L_2(\mathbb
R^n)}<C_2\varepsilon^{1/2}, \label{eq28}
\end{equation}  where
$J_\varepsilon=\{j:|\lambda^{(j)}(\varepsilon)-\lambda_0|<C\varepsilon^{1/2}\}$ 
 is a finite set of indices (for each $\ve$), 
and $\lambda^{(j)}(\varepsilon)$, $u^\varepsilon_j(x)$ are eigenvalues
and $L_2$-normalized eigenfunctions of $A_\varepsilon$, and the
constants $C_2$ and $C$ are independent of $\varepsilon$.
\end{theorem}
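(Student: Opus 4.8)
The plan is to prove Theorem \ref{thm1} by the classical two-step scheme for spectral convergence via asymptotic expansions: (i) construct a good approximate eigenfunction $u^{\rm appr}$ satisfying $\|(A_\ve - \lambda_0)u^{\rm appr}\|_{L^2} \le C\,\ve^{1/2}\|u^{\rm appr}\|_{L^2}$, and (ii) invoke the spectral theorem for the self-adjoint operator $A_\ve$ to conclude that the spectrum of $A_\ve$ meets the interval $[\lambda_0 - C\ve^{1/2}, \lambda_0 + C\ve^{1/2}]$, and moreover that the spectral projector of $A_\ve$ onto this interval applied to $u^{\rm appr}$ is $\ve^{1/2}$-close to $u^{\rm appr}$. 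Since we are inside a gap of the essential spectrum (which by \cite{Birm,AA,FK1} is unchanged by the compact defect, and for $\ve$ small is close to the limit essential spectrum whose complement near $\lambda_0$ is the gap $\beta(\lambda)<0$), the part of the spectrum of $A_\ve$ in that small interval consists, for $\ve$ small enough, only of isolated eigenvalues of finite multiplicity; this yields \eqref{mainresult}, and the almost-orthogonality / spectral-projector estimate yields \eqref{eq28} with $J_\ve$ the relevant cluster of eigenvalue indices. The only genuinely quantitative input needed for both conclusions is the residual bound in step (i), i.e. precisely the statement of Theorem \ref{l:estim}, which is postponed to Section 6.

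To carry out step (i) one starts from the formal two-scale ansatz \eqref{ansatz1}: take $u^{(0)}(x,y)$ as in \eqref{u0} with $(u_0,v)$ the given eigenpair of $A_0$, solve the cell problems of Appendix A for the first- and second-order correctors $u^{(1)},u^{(2)}$ (the latter being where the homogenized matrix $A^{\rm hom}$ and the macroscopic equations \eqref{330}--\eqref{332} emerge as solvability conditions), and set $U^\ve(x) := u^{(0)}(x,x/\ve) + \ve u^{(1)}(x,x/\ve) + \ve^2 u^{(2)}(x,x/\ve)$, suitably cut off to respect the actual geometry $\Omega_0^\ve,\Omega_1^\ve,\Omega_2,\tilde\Omega_0^\ve$. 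Applying $A_\ve - \lambda_0$ to this field and using the cell equations, the bulk terms cancel to the required order; what remains are (a) discrepancies supported on the $\ve$-neighbourhood of $\partial\Omega_2$ coming from the mismatch between the periodic structure and the defect boundary — the high-contrast boundary layer — and (b) the tail contributions from the exponential decay of $u_0$ at infinity, which are harmless. The exponential decay of $u_0$ (guaranteed since $\beta(\lambda_0)<0$, cf. \eqref{eq:eq5}) ensures all global $L^2$-norms are finite and the errors are localised. One then checks that $\|u^{\rm appr} - U^\ve\|_{L^2} = O(\ve)$ so that replacing the full corrected ansatz by the simpler $u^{\rm appr}$ of \eqref{eq:eq27} does not spoil the $\ve^{1/2}$ bound.

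The hard part is estimating the boundary-layer residual (item (a) above): near $\partial\Omega_2$ the inclusions $\tilde Q_0^\ve$ are cut, the coefficient there is the essentially unconstrained $\tilde a_0(\ve)\in(0,\tilde A_0]$, and the main-order field $v(x,x/\ve)$ does not satisfy the Dirichlet condition $v=0$ on $\partial Q_0$ for the truncated inclusions, producing $O(\ve^{-1})$-size conormal-derivative jumps on an $O(\ve)$-thick collar. The remedy — this is the novel technical ingredient flagged in the introduction and in Section 5 — is a high-contrast boundary-layer corrector $u^{\rm bl}$ constructed on the half-space-like geometry near $\partial\Omega_2$, decaying into the bulk, chosen to kill the leading interface mismatch; one must control its $L^2$-norm (which turns out to be $O(\ve^{1/2})$ because it lives on a collar of volume $O(\ve)$ but has $O(1)$ amplitude) and show that $A_\ve - \lambda_0$ applied to the whole package is $O(\ve^{1/2})$ in $L^2$. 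Assembling these estimates — together with the elementary functional-analytic lemma that $\|(A-\lambda)\psi\|\le\delta\|\psi\|$ for self-adjoint $A$ forces $\mathrm{dist}(\lambda,\mathrm{spec}\,A)\le\delta$ and $\|\psi - E_A([\lambda-\delta,\lambda+\delta])\psi\|\le\delta\|\psi\|$ — completes the proof of Theorem \ref{thm1}, modulo Theorem \ref{l:estim}. The remaining point, that the nearby spectrum consists of isolated finite-multiplicity eigenvalues, follows from the stability of the essential spectrum under compact perturbations and the gap structure of the limit operator, as noted above.
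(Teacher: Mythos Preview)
Your high-level architecture --- quasimode estimate, spectral theorem, stability of the essential spectrum --- matches the paper's, and you correctly isolate Theorem \ref{l:estim} as the substantive input. However, two points of your execution diverge from the paper and one of them is a genuine gap.

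First, you phrase step (i) as $\|(A_\ve-\lambda_0)u^{\rm appr}\|_{L^2}\le C\ve^{1/2}\|u^{\rm appr}\|_{L^2}$, but $u^{\rm appr}=U_\ve u^0$ lies only in $H^1(\mathbb R^n)$, not in $D(A_\ve)$: the conormal-derivative transmission conditions across $\partial\Omega_0^\ve$ fail, so $(A_\ve-\lambda_0)u^{\rm appr}$ is not an $L^2$ function and your inequality is not well posed. The paper avoids this by stating Theorem \ref{l:estim} in \emph{resolvent} form, $\|U_\ve u^0-(A_\ve+I)^{-1}(\lambda_0+1)U_\ve u^0\|_{L^2}\le C\ve^{1/2}$, and then applies the distance-to-spectrum bound \eqref{eq:dist} to the \emph{bounded} operator $B_\ve=(A_\ve+I)^{-1}$ at $\mu=(\lambda_0+1)^{-1}$ (Lemma \ref{l:general}(ii)). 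The proof of Theorem \ref{l:estim} itself is carried out entirely at the level of the quadratic form $b_\ve$, via the duality estimate of Lemma \ref{lemmatech}, never applying $A_\ve$ to an approximate function.

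Second, your sketch of the residual bound calls for constructing an explicit high-contrast boundary-layer corrector $u^{\rm bl}$ on a half-space model near $\partial\Omega_2$. The paper explicitly does \emph{not} do this, remarking that constructing such boundary layers is in general open even in the moderate-contrast case. Instead it adds only the first-order corrector $\ve N_j(x/\ve)u_{0,j}$ in $\Omega_1^\ve$ (no $u^{(2)}$, no $u^{\rm bl}$), rewrites the corrector flux via the skew-symmetric potential \eqref{eq:G}, inserts the cut-off $\chi_\ve$ of \eqref{chi} to make that flux divergence-free up to $\partial\Omega_2$, and uses the extension lemma \eqref{what} to control the test function on the inclusions. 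The $\ve^{1/2}$ loss then comes simply from Cauchy--Schwarz on the $O(\ve)$-collar where $1-\chi_\ve$ is supported. Your route via an explicit $u^{\rm bl}$ is not demonstrably wrong, but it is substantially harder than what the paper actually does, and you would first have to repair the domain issue in step (i) before it could even begin.
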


{\bf Proof:} We first  establish estimates
somewhat related to the ``strong resolvent'' convergence of operators
$A_\varepsilon$ when $\varepsilon\to 0$. Strictly speaking, those are
related to some generalization of the latter:  the usual resolvent
convergence is not suitable for our purposes because the space
where the limit operator $A_0$ acts differs from the
space natural for operators $A_\varepsilon$. One therefore has to
refer to the so-called {\it two-scale} strong resolvent convergence,
see Zhikov \cite{Zh1, Zh2}.

Let $u^0(x,y)=\left(u_0(x),v(x,y)\right)\in D(A_0)$ be an eigenfunction
of the operator $A_0$ corresponding to an eigenvalue $\lambda_0$,
$\beta(\lambda_0)<0$, $\lambda_0\neq\lambda_j$, $j\geq 1$. 
Denote by $U_\varepsilon$ the ``transfer'' operator,
constructing from $u^0$ the approximate eigenfunction $u^{\rm appr}$
via (\ref{eq:eq27}), i.e. 
\begin{equation} (U_\varepsilon u^0)(x):=
    \left\{
      \begin{array}{ll}
         u_0(x)+v(x,x/\varepsilon), & x\in \Omega_0^\varepsilon,
          \\
          u_0(x), & x\in \Omega_1^\varepsilon\bigcup\Omega_2\cup
\tilde\Omega_0^\varepsilon.
      \end{array} \right.
\label{eq:uapr}
      \end{equation}
\noindent Notice that due to the regularity of $u_0$ and $v$ (which
solve (\ref{eq:eq4})--(\ref{316})), decomposition (\ref{specdecomv})
and the exponential decay of $u_0$ at infinity when
$\beta(\lambda_0)<0$, $U_\varepsilon u^0\in L_2(\mathbb R^n)$.

Denoting by $I$ the unity operator, 
we formulate next the main technical statement of this work, close
to that of the two-scale resolvent convergence, cf. \cite{Zh1, Zh2}:

\begin{theorem} \label{l:estim}
Let $u^0(x,y)=\left(u_0(x),v(x,y)\right)\in D(A_0)$ be an eigenfunction
of the operator $A_0$ :
\be A_0 u^0=\lambda_0 u^0,\,\,\,\,\lambda_0\neq \lambda_j,\,\, j\geq 1;\,\, \beta(\lambda_0)<0.
\ee
Consider the following (resolvent) problem for the original operator $A_\ve$:
 \be (A_\varepsilon+I)
\tilde u^\varepsilon=(\lambda_0+1)U_\varepsilon u^0.
\label{problem42}
\ee
Then  \be\|U_\varepsilon u^0-
\tilde u^\varepsilon\|_{L_2(\mathbb{R}^n)}\leq C\varepsilon^{1/2},
\label{eq:main}
\ee
with a constant $C$ independent of $\varepsilon$.
\end{theorem}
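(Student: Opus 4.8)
The plan is to construct an explicit corrector-type ansatz for $\tilde u^\varepsilon$, test the resolvent equation against it, and control the residual in the relevant norm. Concretely, I would take as a trial function a truncated two-scale expansion built from $u^0$: in the ``bulk'' region away from the defect boundary one uses $U_\varepsilon u^0$ together with first- and second-order correctors $\varepsilon u^{(1)}(x,x/\varepsilon)+\varepsilon^2 u^{(2)}(x,x/\varepsilon)$, where $u^{(1)}$ carries the classical ``soft inclusions'' cell correctors $N_k(y)\partial_k u_0$ (solving the perforated-domain cell problem associated with \eqref{eq:ahom}) and $u^{(2)}$ is chosen so that the $\varepsilon^0$-order equation reproduces \eqref{330}--\eqref{limsystem}, i.e. $\beta(\lambda_0)u_0$ on the matrix side. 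Near $\partial\Omega_2$, where the $\varepsilon$-inclusions are cut by the defect boundary and the periodic corrector is no longer admissible, one multiplies the corrector terms by a cutoff that vanishes in an $O(\varepsilon)$-neighbourhood of $\partial\Omega_2$, and adds a high-contrast boundary-layer term (decaying away from $\partial\Omega_2$) to restore continuity of the function and of the conormal flux across the interfaces. This is exactly the construction whose details are deferred to Section~6 / Appendix~A, so I would import the correctors from there.

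Once the ansatz $w^\varepsilon$ is fixed, the estimate follows from the standard ``almost-solution'' lemma for self-adjoint operators: if $\|(A_\varepsilon+I)w^\varepsilon-(\lambda_0+1)U_\varepsilon u^0\|_{L_2}\le \delta_\varepsilon$ then $\|w^\varepsilon-\tilde u^\varepsilon\|_{L_2}\le \delta_\varepsilon$, since $\tilde u^\varepsilon=(A_\varepsilon+I)^{-1}(\lambda_0+1)U_\varepsilon u^0$ and $\|(A_\varepsilon+I)^{-1}\|\le 1$ by non-negativity of $A_\varepsilon$. Hence it suffices to show $\|w^\varepsilon - U_\varepsilon u^0\|_{L_2}=O(\varepsilon^{1/2})$ (immediate, since the added correctors are $O(\varepsilon)$ in the bulk and the boundary layer is supported in a strip of width $O(\varepsilon)$) and that the discrepancy $f^\varepsilon:=(A_\varepsilon+I)w^\varepsilon-(\lambda_0+1)U_\varepsilon u^0$ satisfies $\|f^\varepsilon\|_{L_2}=O(\varepsilon^{1/2})$. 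The latter splits into: (i) interior terms, where by design of $u^{(1)},u^{(2)}$ and the equations \eqref{330}--\eqref{limsystem},~\eqref{eq:V} all $\varepsilon^0$ and $\varepsilon^1$ contributions cancel pointwise and what remains is $O(\varepsilon)$ uniformly (using smoothness of $Q_0,\partial\Omega_2$, the regularity of $u_0$, the spectral decomposition \eqref{specdecomv}, and the exponential decay of $u_0$ granted by $\beta(\lambda_0)<0$); (ii) commutator/cutoff terms, supported in the $O(\varepsilon)$-strip around $\partial\Omega_2$, which are $O(1)$ pointwise but live on a set of measure $O(\varepsilon)$, yielding an $L_2$ contribution $O(\varepsilon^{1/2})$; (iii) boundary-layer residual terms, controlled by the exponential decay of the boundary-layer profile together with the $\varepsilon^2$-scaling $a_0(\varepsilon)=a_0\varepsilon^2$ in the cut inclusions and the bound \eqref{tildea0bounds} on $\tilde a_0(\varepsilon)$.

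The main obstacle is item (ii)--(iii): the high-contrast boundary-layer analysis near $\partial\Omega_2$. Unlike classical homogenization, here the operator degenerates like $\varepsilon^2$ inside the inclusions, so the usual energy bound on the corrector in the boundary strip is not uniform, and one must instead track how the rescaled boundary-layer problem (in stretched variable $(x-x_0)/\varepsilon$) sees the soft inclusions; establishing the requisite exponential decay of this profile and the precise $\varepsilon$-powers is the delicate part — this is the content flagged in the introduction as ``a delicate boundary layer asymptotic analysis in the high contrast case (Section~5)''. A secondary subtlety is handling the cut inclusions $\tilde\Omega_0^\varepsilon$, on which $u^{\rm appr}=u_0$ has no fast correction: one must verify that the flux mismatch across $\partial\tilde\Omega_0^\varepsilon$ and $\partial\Omega_2$ is absorbed by the boundary-layer term with only an $O(\varepsilon^{1/2})$ cost, which is where the freedom in choosing $\tilde a_0(\varepsilon)$ within \eqref{tildea0bounds} is used. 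Everything else — the interior cancellations, the measure-of-strip argument, and the final application of the almost-solution lemma — is routine once the boundary-layer estimates are in hand.
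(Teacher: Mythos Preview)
Your overall strategy --- build a corrector ansatz, control the residual, invoke the resolvent bound --- is sound in spirit, but it diverges from the paper's actual argument in a way that creates a real gap. The paper does \emph{not} bound a strong $L_2$-residual $\|(A_\varepsilon+I)w^\varepsilon-(\lambda_0+1)U_\varepsilon u^0\|_{L_2}$; it works instead in the weak (quadratic-form) sense, proving $|b_\varepsilon(w,U_1^\varepsilon-\tilde u^\varepsilon)|\le C\varepsilon^{1/2}b_\varepsilon(w,w)^{1/2}$ for all $w\in H^1(\mathbb{R}^n)$ (Lemma~\ref{lemmatech}), and only then specialises $w=U_2^\varepsilon-\tilde u^\varepsilon$. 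This lets them get away with only the \emph{first}-order corrector $\varepsilon N_j(x/\varepsilon)u_{0,j}$ in $\Omega_1^\varepsilon$ (no second-order $u^{(2)}$ is ever built in the proof), and the piecewise ansatz $U_1^\varepsilon$ is not even in $H^1(\mathbb{R}^n)$, let alone $D(A_\varepsilon)$. The crucial technical device that replaces your transmission-condition matching is the \emph{extension lemma} \eqref{what}: for each test function $w$ they pass to an extension $\hat w$ from the stiff matrix into the soft inclusions with uniformly controlled $H^1$-norm, split $w=\hat w+z^\varepsilon$ with $z^\varepsilon\in H^1_0(\Omega_0^\varepsilon)$, and handle the two pieces separately (Propositions~\ref{propa1}--\ref{propa0}); the flux corrector $g_i^j$ is rewritten via skew-symmetric potentials $G^j_{ik}$ to shift derivatives onto $\hat w$.

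The specific gap in your proposal is item (iii): you assume you can construct a high-contrast boundary-layer profile near $\partial\Omega_2$ with exponential decay in the stretched variable, and import it from ``Section~6~/~Appendix~A''. But no such profile is constructed anywhere in the paper --- Appendix~A gives only the formal interior expansion, and the paper explicitly flags (paragraph after Remark~4.1) that constructing the boundary layer is ``still an open problem in general'' and that here its effect is ``instead somewhat controlled'' by the weak-form estimate. The paper's boundary handling is purely a cutoff $\chi_\varepsilon$ supported in an $O(\varepsilon)$-strip (see \eqref{chi}, \eqref{eq:r1010}--\eqref{r1eps}), with the $\varepsilon^{1/2}$ coming from the measure-of-strip argument applied to $\hat w$, not to any decaying profile. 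So as written, your step (iii) rests on a construction that is neither supplied nor easy, and without it your strong-residual estimate cannot close; the paper's weak-form route with the extension lemma is precisely what circumvents this.
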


\begin{rem} The above estimate can be  equivalently rewritten in a form which
somewhat clarifies the role of operator $U_\varepsilon$:
 \be\|U_\varepsilon(A_0+I)^{-1}u^0-(A_\varepsilon+I)^{-1}U_\varepsilon
 u^0
\|\leq C \varepsilon^{1/2},
\label{eq:isom}\ee
where $u^0$ is
an eigenfunction of operator $A_0$, cf. \cite{Zh05}.
\end{rem}
The proof of Theorem \ref{l:estim} constitutes the key technical
component for establishing the main result (\ref{mainresult}). 
The proof of the central error bound (\ref{eq:main})
 requires, among other ingredients, the development of a high
contrast 
version of the asymptotic analysis of the 
boundary layer near the boundary of the defect $\Omega_2$, conceptually 
somewhat similar to e.g. \cite{JKO} \S 1.4. The complete proof of
Theorem  \ref{l:estim} is given in Section 5. We remark here that the 
need of executing a series of technical error estimates in Section 5  
is caused by the fact that ``globally'' we can explicitly construct only 
the main order term in the asymptotic expansion, with the major obstacle 
being the need to control the effect of the boundary layer near the defect's 
border $\partial\Omega_2$. Constructing or analyzing the boundary layer in homogenization 
is still an open problem in general, even in the classical (``moderate contrast'') case, 
see e.g. \cite{NeussRadu} 
for some 
developments. 
Nevertheless, the boundary layer's effect 
can instead be somewhat controlled via an order-optimal error bound of order $\ve^{1/2}$ in the $H^1$ norm, 
see e.g. \cite{JKO} \S 1.4 for the case of boundaries with Dirichlet or Neumann conditions. 
In the present work we face however the {\it high contrast} 
version of the boundary layer problem and in effect show that 
even in this  case (for the interface conditions) the boundary layer accounts for the
 ``$\ve$-square root'' error, but in the $L_2$ norm, see \eqref{eq:main}. 
The series of technical lemmas and propositions in Section 5 ensure 
that the approximation $U_\ve u^0$ is close to the exact solution 
$\tilde u^\ve$ in the sense of appropriate quadratic forms, see \eqref{eq:form}. 
Namely, the main-order contributions of the quadratic forms corresponding to 
$U_\ve u^0$ and $\tilde u^\ve$  coincide and the errors are 
controllably small, including those due to the boundary layer. 

Denote by $\sigma_{\rm ess} (A_\varepsilon)$ the essential spectrum of
operator $A_\ve$.
The next step is, partly, a specialization of a more general methodology, see
e.g. \cite{JKO} \S 11.1, to the present context:

\begin{lemma} \label{l:general}
Let $\lambda_0$ be an eigenvalue of operator $A_0$, $\beta(\lambda_0)<0$,
$\lambda_0\neq \lambda_j$, $j\geq 1$, and let $u^0=(u_0,v)$ be associated
eigenfunction.
 Then:
\begin{enumerate}
\item[(i)]
For sufficiently small $\varepsilon$ there exists $c>0$ independent of
$\varepsilon$, such that 
\begin{equation}
(\lambda_0 -c,\lambda_0
+c)\bigcap \sigma_{\rm ess} (A_\varepsilon)=\,\emptyset.
\label{noess}
\end{equation}
\item[(ii)]
There exists, for sufficiently small $\varepsilon$, an isolated eigenvalue
(hence of finite multiplicity) $\lambda(\varepsilon)$ of operator
$A_\varepsilon$, such that
\be|\lambda(\varepsilon)-\lambda_0|<C_1\varepsilon^{1/2},
\label{eq:geneigenestim} \ee with constant $C_1$ independent of
$\varepsilon$.
\item[(iii)]
There exist constants
$c_j(\varepsilon)$ such that \begin{equation} \|U_\varepsilon
u^0-\sum_{j\in J_\varepsilon}
c_j(\varepsilon)u_j^\varepsilon\|_{L_2}<C_2\varepsilon^{1/2},
\label{eq:genestim}
\end{equation}
 where
$J_\varepsilon=\{j:|\lambda^{(j)}(\varepsilon)-\lambda_0|<C\varepsilon^{1/2}\}$; 
$\lambda^{(j)}(\varepsilon), u^\varepsilon_j(x)$ are eigenvalues and 
($L_2$-normalized) 
eigenfunctions of $A_\varepsilon$, and the constants $C_1$ and $C_2$
are independent of $\varepsilon$.
\end{enumerate}
\end{lemma}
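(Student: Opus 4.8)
The plan is to deduce Lemma~\ref{l:general} from Theorem~\ref{l:estim} using standard spectral-theoretic arguments for self-adjoint operators, treating parts (i)--(iii) as consequences of a single \emph{almost-eigenvector} estimate. The starting observation is that \eqref{eq:main} together with the resolvent equation \eqref{problem42} says precisely that $U_\varepsilon u^0$ is an almost-eigenvector of $A_\varepsilon$ with almost-eigenvalue $\lambda_0$: indeed $(A_\varepsilon+I)^{-1}(\lambda_0+1)U_\varepsilon u^0 = \tilde u^\varepsilon$, so $\|(A_\varepsilon+I)^{-1}U_\varepsilon u^0 - (\lambda_0+1)^{-1}U_\varepsilon u^0\| \le C(\lambda_0+1)^{-1}\varepsilon^{1/2}$. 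Since $(A_\varepsilon+I)^{-1}$ is bounded self-adjoint with spectrum in $(0,1]$, and $(\lambda_0+1)^{-1}$ lies in its spectral window, the spectral theorem gives that $(A_\varepsilon+I)^{-1}$ has spectrum within distance $O(\varepsilon^{1/2})$ of $(\lambda_0+1)^{-1}$; translating back, $A_\varepsilon$ has spectrum within distance $O(\varepsilon^{1/2})$ of $\lambda_0$. One must also check $\|U_\varepsilon u^0\|_{L_2}$ is bounded below uniformly in $\varepsilon$ --- this follows because $\|U_\varepsilon u^0 - u_0\|_{L_2}\to 0$ (the oscillating part $v(x,x/\varepsilon)$ is supported on $\Omega_0^\varepsilon$ and contributes a fixed $L_2$ quantity while $u_0\neq 0$), so $U_\varepsilon u^0$ stays away from zero.

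For part (i), the point is that the piece of spectrum of $A_\varepsilon$ produced near $\lambda_0$ cannot be essential spectrum. Here I would invoke the fact, already noted in the introduction with references \cite{Birm, AA, FK1}, that $\sigma_{\rm ess}(A_\varepsilon)$ coincides with the essential (= Floquet--Bloch) spectrum of the \emph{unperturbed} $\varepsilon$-periodic high-contrast operator, since $A_\varepsilon$ is a compactly supported perturbation of it. By the Hausdorff convergence of the periodic spectrum to the limit spectrum $\{\beta(\lambda)\ge 0\}\cup\{\lambda_j\}$ (Zhikov \cite{Zh1,Zh2}, Hempel--Lienau \cite{HL}), and since $\lambda_0$ satisfies $\beta(\lambda_0)<0$ and $\lambda_0\neq\lambda_j$, there is a fixed gap around $\lambda_0$ in the limit spectrum; for $\varepsilon$ small enough the periodic spectrum --- hence $\sigma_{\rm ess}(A_\varepsilon)$ --- avoids an interval $(\lambda_0-c,\lambda_0+c)$. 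This gives \eqref{noess}.

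Parts (ii) and (iii) then follow by combining (i) with the almost-eigenvector estimate. Since $(\lambda_0-c,\lambda_0+c)$ contains no essential spectrum of $A_\varepsilon$, any spectrum of $A_\varepsilon$ there consists of isolated eigenvalues of finite multiplicity. The almost-eigenvector estimate forces $\sigma(A_\varepsilon)\cap(\lambda_0-C_1\varepsilon^{1/2},\lambda_0+C_1\varepsilon^{1/2})\neq\emptyset$ (otherwise the spectral projector argument above would contradict $\|U_\varepsilon u^0\|\gtrsim 1$), yielding (ii). For (iii), let $E_\varepsilon$ be the spectral projector of $A_\varepsilon$ onto $J_\varepsilon = (\lambda_0-C\varepsilon^{1/2},\lambda_0+C\varepsilon^{1/2})$ for a suitable $C$; writing $A_\varepsilon = \int \mu\, dE_\mu$ and using $\|(A_\varepsilon-\lambda_0)(A_\varepsilon+I)^{-1}U_\varepsilon u^0\|\le C\varepsilon^{1/2}$, the complementary part $(I-E_\varepsilon)U_\varepsilon u^0$ satisfies $\|(A_\varepsilon-\lambda_0)(I-E_\varepsilon)w\|\ge c'\|(I-E_\varepsilon)w\|$ by the spectral gap away from $J_\varepsilon$ (no essential spectrum, and discrete eigenvalues outside $J_\varepsilon$ are at distance $\gtrsim\varepsilon^{1/2}$ from $\lambda_0$ once $C$ is chosen larger than $C_1$), whence $\|(I-E_\varepsilon)U_\varepsilon u^0\|\le C_2\varepsilon^{1/2}$; expanding $E_\varepsilon U_\varepsilon u^0$ in the finite eigenbasis $\{u_j^\varepsilon\}_{j\in J_\varepsilon}$ and setting $c_j(\varepsilon)=(U_\varepsilon u^0,u_j^\varepsilon)$ gives \eqref{eq:genestim}. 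The main obstacle is the bookkeeping in part~(i): one needs the essential-spectrum stability under the defect perturbation \emph{uniformly} as $\varepsilon\to 0$ together with the Hausdorff convergence of the periodic spectra, so that the gap around $\lambda_0$ is genuinely $\varepsilon$-independent; the rest is a routine but careful application of the spectral theorem for self-adjoint operators, and the quantitative $\varepsilon^{1/2}$ rate is inherited directly from Theorem~\ref{l:estim}.
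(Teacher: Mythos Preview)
Your proof is essentially correct and follows the same route as the paper: part~(i) via stability of the essential spectrum under the compact defect combined with Hausdorff convergence of the unperturbed periodic spectra; part~(ii) via the almost-eigenvector estimate for the resolvent $B_\varepsilon=(A_\varepsilon+I)^{-1}$ at $\mu=(\lambda_0+1)^{-1}$ drawn from Theorem~\ref{l:estim}; part~(iii) via the spectral projectors of $A_\varepsilon$.

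One small correction: your justification that $\|U_\varepsilon u^0\|_{L_2}$ is bounded below is garbled --- you write ``$\|U_\varepsilon u^0 - u_0\|_{L_2}\to 0$'' and in the same breath that the oscillating part ``contributes a fixed $L_2$ quantity'', which are contradictory (indeed $v(x,x/\varepsilon)$ does \emph{not} tend to zero strongly in $L_2$). The paper's argument is simpler and avoids this: since $U_\varepsilon u^0 = u_0$ on $\Omega_2$, one has directly $\|U_\varepsilon u^0\|_{L_2(\mathbb R^n)} \ge \|u_0\|_{L_2(\Omega_2)} > 0$, the last inequality because $u_0\equiv 0$ on $\Omega_2$ together with \eqref{eq:eq5}--\eqref{316} and unique continuation would force $u_0\equiv 0$ everywhere. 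Also, in part~(iii) your expression $\|(A_\varepsilon-\lambda_0)(A_\varepsilon+I)^{-1}U_\varepsilon u^0\|$ is fine once interpreted spectrally, but note $U_\varepsilon u^0\notin D(A_\varepsilon)$ in general; the paper works directly with the spectral measure $dP_\lambda^\varepsilon(U_\varepsilon u^0)$ and the integrand $(\lambda+1)^{-1}-(\lambda_0+1)^{-1}$, which sidesteps any domain issue.
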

\begin{proof}

{\it (i)}
The assertion (\ref{noess}) follows from the Hausdorff convergence of the
spectra of the ``unperturbed'' (i.e. with no defects) operators with
periodic coefficients \cite{HL, Zh2}, and the stability of the essential
spectrum due to localized defects, e.g. \cite{FK1}.

{\it (ii)} To prove (\ref{eq:geneigenestim}) recall that the
distance from a point $\mu$ to the spectrum of a linear self-adjoint operator $B$
in $L_2(\mathbb R^n)$ can be bounded from above as follows 
\begin{equation}
\text{dist}(\mu,\sigma (B))\leq \frac{\|Bu-\mu
u\|_{L_2(\mathbb R^n)}}{\|u\|_{L_2(\mathbb R^n)}},  \label{eq:dist}
\end{equation}
with any $u\in D(B)$, $u\neq 0$. Let us take as $B$ and $\mu$,
respectively, $B=B_\varepsilon:=(A_\varepsilon+I)^{-1}$ and
$\mu=(\lambda_0+1)^{-1}$. Then $B$ is bounded with $D(B)=L^2({\mathbb R}^n)$, and 
obviously $\lambda\in
\sigma(A_\varepsilon)$ if and only if $(\lambda+1)^{-1}\in \sigma
(B_\varepsilon)$. Now select $u=U_\varepsilon u^0$. Then according to Theorem
\ref{l:estim}, see \eqref{eq:isom}, the numerator in  \eqref{eq:dist} can be
estimated as follows
$$\|(A_\varepsilon+I)^{-1}U_\varepsilon
u^0-(\lambda_0+1)^{-1}U_\varepsilon u^0\|_{L_2(\mathbb R^n)}=$$
 \be
=\|(A_\varepsilon+I)^{-1}U_\varepsilon
u^0-U_\varepsilon(A_0+I)^{-1} u^0\|_{L_2(\mathbb R^n)}\leq
C\varepsilon^{1/2},
\label{EFerror}
\ee
(where we have also used that $(1+\lambda_0)^{-1}u^0=(A_0+I)^{-1}u^0$).
Obviously, the denominator in (\ref{eq:dist}) is bounded from below
(e.g. $\Vert U_\ve u^0\Vert_{L_2(\mathbb R^n)}\geq
\Vert u_0\Vert_{L_2(\Omega_2)}>0$).
As a result, $\text{dist}(\mu,\sigma (B_\varepsilon))\leq c_1
\varepsilon^{1/2}$, with some $\ve$-independent $c_1$.
Using then, for example, for small enough $\ve$ obvious inequality, 
$\text{dist}(\lambda_0,\sigma (A_\ve))\leq L(\ve) \text{dist}(\mu,\sigma (B))$
with $L(\ve)=\left(\mu-c_1\ve^{1/2}\right)^{-2}$ we arrive at
$\text{dist}(\lambda_0,\sigma (A_\varepsilon))\leq c
\varepsilon^{1/2}$, with some $\ve$-independent positive constant $c$.
Finally notice that, for sufficiently small $\ve$, the interval $(\lambda_0
-c\varepsilon^{1/2},\lambda_0 +c\varepsilon^{1/2})$ may contain only
isolated eigenvalues of $A_\varepsilon$ by (\ref{noess}).
This proves the existence of eigenvalues $\lambda(\varepsilon)$ satisfying
\eqref{eq:geneigenestim}.

{\it (iii)} The assertion \eqref{eq:genestim} is a consequence of
(\ref{EFerror}) and general results, see e.g. \cite{VL} or
\cite{JKO} \S 11.1, and follows by applying spectral decomposition
of $U_\ve u^0$ with respect to $A_\ve$, and using $(i)$ and $(ii)$. 
Namely, if $P_\lambda^\varepsilon$ are spectral projectors of $A_\ve$, then 
\[
(A_\varepsilon+I)^{-1}U_\varepsilon
u^0-(\lambda_0+1)^{-1}U_\varepsilon u^0\,=\,
\int_0^\infty \left(\frac{1}{\lambda+1}\,-\,\frac{1}{\lambda_0+1}\right)\,dP^\ve_\lambda(U_\ve u^0), 
\]
and \eqref{eq:genestim} follows from \eqref{eq:main}, the orthogonality properties of the spectral projectors, 
and \eqref{eq:geneigenestim}. 
\end{proof}

Now the Theorem \ref{t:main} directly follows from Theorem \ref{l:estim} and
Lemma \ref{l:general}. $\square$

\begin{rem}
It is relatively straightforward to slightly modify the statement of 
Theorem \ref{t:main} for limit eigenvalues $\lambda_0$ with ``multiplicities''. 
Namely, let 
for a given $\lambda_0$, $\beta(\lambda_0)<0$, $\lambda_0\neq \lambda_j$, $j=1,2,...$, 
there exist $m$, $2\leq m<\infty$, linearly independent eigenfunctions 
$u^{(0)}_j(x,y)$, $j=1,...,m$ of the two-scale limit operator $A_0$. 
Then we claim that, for sufficiently small $\ve$, there exist {\it not less than 
$m$} eigenvalues of $A_\ve$ (counted with their own multiplicities) such that the 
error bound \eqref{eq:main} is valid for each of them. The above proof could be 
modified for this case by for example first ``orthogonalizing'' $u^{(0)}_j(x,y)$, 
$j=1,...,m$   (with respect to the inner product 
induced by the quadratic form $B_0$, see \eqref{Qform}), and then showing that the 
associated approximations $U_\ve u^0_j$, $j=1,...,m$ are ``approximately'' mutually orthogonal 
for small $\ve$ in the ``original'' space $L_2\left({\mathbb R}^n\right)$. The latter 
would then allow to modify the above existence and error bounds argument for the case of 
 multiplicities. We do not elaborate on this in detail to avoid further technical 
complications, but also since 
(under an additional restriction on the boundary inclusions) 
a further strengthening of this result is 
possible (via a further advance of the theory, see \cite{cherd}), to the effect that there are not only 
``at least'' but also ``at most'' as many eigenvalues as $m$ near $\lambda_0$: see \cite{cherd} and the discussion in Section 7. 
\end{rem}

\section{Proof of Theorem \ref{l:estim}}
\setcounter{equation}{0}

We give in this section a full proof of the main technical Theorem
\ref{l:estim}, whose key error bound \eqref{eq:main} establishes
the closeness of the exact solution $\tilde u^\ve$ of
\eqref{problem42} to the approximate solution $U_\ve u^0$
constructed via \eqref{eq:uapr} in terms of the solution
$u^0:=(u_0(x), v(x,y))$ of the limit problem
\eqref{330}--\eqref{limsystem}.

The proof of the Theorem \ref{l:estim} will be divided into a number of stages. 
The plan is roughly as follows. The closeness of $U_\ve u^0$ and $\tilde u$ 
is established employing the associated quadratic form $b_\ve$, see 
\eqref{eq:form} below, where $U_\ve u^0$ is replaced by its modification $U^\ve_1(x)$,  
incorporating some higher-order terms in the asymptotic expansion \eqref{ansatz1}, 
see \eqref{eq:u3}. Namely, we show that it is sufficient for our purposes to establish 
the closeness in the sense of \eqref{lemmatech1}, Lemma \ref{lemmatech}. A 
technical proof of Lemma \ref{lemmatech} itself then follows by first splitting the 
quadratic form in the left hand side of \eqref{lemmatech1} into those corresponding 
to $U^\ve_1(x)$ and $\tilde u$, and then splitting the former further into 
a number of components (corresponding to the various domains of the integration in \eqref{eq:form})  
and 
examining those separately, see Propositions \ref{proptildea0}--\ref{propa2}. For 
each component the main-order parts are explicitly evaluated and the errors  
are bounded. Eventually everything is assembled together and the main-order terms 
cancel each other as anticipated, whereas the errors are shown to be ``at worst'' of 
order $\ve^{1/2}$. (The latter $\ve^{1/2}$-errors correspond in a sense to the effect 
of the boundary-layer near the defect's border, and those of order 
$\ve$ or higher to the truncation of the asymptotic ansatz away from it.) An essential specific 
technical ingredient used in the course of implementing the above strategy is the 
employment of the so-called ``extension lemma'' in Proposition \ref{propa1}. Extension 
lemmas have been intensively used for homogenization problems in perforated domains 
before, see e.g. \cite{JKO} and further references therein, as is briefly reviewed by us 
below too, see \eqref{what} and accompanying discussion. 

To proceed, notice first that the above approximation $U_\ve u^0$ as defined by 
\eqref{eq:uapr} lies in appropriate
functional spaces, in particular $U_\ve u^0\in H^1({\mathbb R}^n)$. 
Observe to this end that $u_0$ is infinitely smooth in $\Omega_2$ and
$\mathbb{R}^n\backslash\overline{\Omega_2}$ as a solution of
elliptic equations with constant coefficients \eqref{eq:eq4} and \eqref{eq:eq5}
respectively. Next $u_0$ decays  exponentially  at infinity, as a decaying solution
of equation with constant coefficients \eqref{eq:eq5} outside $\Omega_2$, since
$\beta(\lambda_0)<0$ and hence the fundamental solution of \eqref{eq:eq5} in the whole
$\mathbb{R}^n$ is exponentially decaying. Further since,
by \eqref{Vy},  $v(x,x/\varepsilon)=u_0(x)V(x/\varepsilon)$,
where $V(y)$ specified by  \eqref{eq:V}  is an $H^1$ periodic
function and its restrictions to $Q_0$ and $Q_1$ are infinitely
smooth, we conclude that
$v(x,x/\varepsilon)$ is an exponentially
decaying function belonging to $H^1(\mathbb{R}^n\backslash\Omega_2)$.

We further aim at establishing error bounds in the energy norms, i.e. with respect to the 
quadratic forms \eqref{eq:form} below associated with the equation \eqref{problem42}.  
For this, we slightly alter the approximation $U_\ve u^0$ by 
adding to it in $\Omega_1^\ve$ the first-order corrector from the asymptotic expansion \eqref{ansatz1}, 
hence introducing the following corrected approximation: 
 \begin{equation} U_1^\ve(x)=
    \left\{
      \begin{array}{lll}
        U_\ve u^0(x)\,=\,u_0(x), & x\in \Omega_2\cup\tilde\Omega_0^\varepsilon, 
\\
        U_\ve u^0(x)\,=\,u_0(x)+v(x,x/\ve)=u_0(x)(1+V(x/\ve)),
				& x\in
           \Omega_0^\varepsilon,
          \\
        U_\ve u^0(x)+\varepsilon N_j(x/\varepsilon)u_{0,j}(x)=
				u_0(x)+\varepsilon N_j(x/\varepsilon)u_{0,j}(x), 
		& x\in
            \Omega_1^\varepsilon.
      \end{array} \right.
      \label{eq:u3}
 \ee
Here $\varepsilon N_j(x/\varepsilon)u_{0,j}(x)$ is the first order
corrector, 
see
\eqref{u1}-\eqref{eq:N}. 

Consider now for any $\ve>0$ the quadratic form $b_\varepsilon$  
corresponding to the operator
$A_\varepsilon +I$:
 \be b_\varepsilon(w,u):=\sum_{j=0}^2
    \int_{\Omega_j^\varepsilon} a_j(\varepsilon)\nabla w\cdot\nabla u
    \,dx
   +
    \int_{\tilde{\Omega}_0^\varepsilon} \tilde a_0(\varepsilon)\nabla w\cdot\nabla u
    \,dx +\int_{\mathbb{R}^n} w \, u \,dx,
   \label{eq:form}
 \end{equation}
(for brevity of notation, $\Omega_2^\ve:=\Omega_2$, $a_j(\ve):=a_j, j=1,2$).
In particular, for the actual solution $\tilde u^\ve$ of (\ref{problem42}),
   \be
                   b_\varepsilon(\tilde{u}^\varepsilon,w)=(f^\varepsilon,w)_{\mathbb{R}^n},
                   \ \forall w \in H^1(\mathbb{R}^n),
        \label{eq:weeksol}
   \ee
where
\[
(f^\varepsilon,w)_{\mathbb{R}^n}:=\int_{\mathbb{R}^n} f^\ve w \,dx,
\]
and
  \be
       f^\varepsilon:=(\lambda_0+1)U_\varepsilon u^0=
\left\{\begin{array}{ll}(\lambda_0+1)u_0(x)\biggl(1+V(x/\ve)\biggr), &x\in
\Omega_0^\ve \\
(\lambda_0+1)u_0(x), &x\notin \Omega_0^\ve,
\end{array} \right.
        \label{eq:deff}
  \ee
via \eqref{problem42}, \eqref{eq:uapr} and \eqref{Vy}.

The domain of the form \eqref{eq:form} is $H^1(\mathbb{R}^n)$,
however we extend it to all ``piecewise $H^1$'' functions $w$, i.e.
such that $w\in H^1(\Omega_j^\varepsilon)$, $j=0,1,2$, $w\in
H^1(\tilde\Omega^\varepsilon_0)$, for which $b_\varepsilon(w,w)$, as
directly defined by the right hand side of \eqref{eq:form},  is
bounded. In particular, $U_1^\ve$ is in this ``extended'' domain.

The proof of Theorem \ref{l:estim} will be  based on the
following key technical lemma. We first state the lemma, then prove the
theorem assuming it is valid, and then prove the lemma itself (which
will in turn consist of several technical steps).
\begin{lemma}\label{lemmatech}
There exists an $\ve$ and $w$-independent $C>0$ such that for any sufficiently small $\ve>0$
and for any $w\in H^1(\mathbb{R}^n)$
\be
\left\vert b_\ve\left(w, U_1^\ve-\tilde u^\ve\right)\right\vert\,\leq\,
C\ve^{1/2}b_\ve(w,w)^{1/2}.
\label{lemmatech1}
\ee
\end{lemma}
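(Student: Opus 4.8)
The plan is to establish the bilinear estimate \eqref{lemmatech1} by exploiting the fact that $\tilde u^\ve$ solves \eqref{eq:weeksol}: for any test function $w\in H^1(\mathbb R^n)$ we have $b_\ve(w,\tilde u^\ve)=(f^\ve,w)_{\mathbb R^n}$ with $f^\ve$ given by \eqref{eq:deff}. Hence the quantity to estimate is $b_\ve(w,U_1^\ve)-(f^\ve,w)_{\mathbb R^n}$, i.e. we must show that $U_1^\ve$ is an \emph{approximate} weak solution of \eqref{problem42} with a residual functional of norm $O(\ve^{1/2})$ in the dual of the form. The first step is therefore to write out $b_\ve(w,U_1^\ve)$ by splitting the integral over $\mathbb R^n$ into the four regions $\tilde\Omega_0^\ve$, $\Omega_0^\ve$, $\Omega_1^\ve$, $\Omega_2$ according to \eqref{eq:form}, inserting the explicit piecewise formula \eqref{eq:u3} for $U_1^\ve$ in each, and integrating by parts region by region.

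The core of the argument is then to show, region by region (this is what Propositions \ref{proptildea0}--\ref{propa2} will do), that after integration by parts the ``main-order'' contributions reproduce exactly the terms of the limit system \eqref{330}--\eqref{limsystem} tested against $w$, so that they telescope against $(f^\ve,w)_{\mathbb R^n}$. Concretely: on $\Omega_0^\ve$ the gradient of $u_0(x)(1+V(x/\ve))$ has a leading term $\ve^{-1}u_0(x)\nabla_yV(x/\ve)$, which when multiplied by $a_0(\ve)=a_0\ve^2$ produces $\ve a_0 u_0\nabla_yV$ — thus the soft-inclusion form is $O(\ve)\cdot(\text{bounded})$ in the naive count, but the \emph{correct} pairing comes from recognizing $-a_0\Delta_yV=\lambda_0(V+1)$ from \eqref{eq:V}, which matches the right-hand side $(\lambda_0+1)u_0(1+V)$ on $\Omega_0^\ve$ in \eqref{eq:deff} up to lower-order cross terms; on $\Omega_1^\ve$ the corrector $\varepsilon N_j(x/\ve)u_{0,j}$ is precisely designed (via the cell problem \eqref{eq:N}) so that the homogenized operator $-\nabla\cdot A^{\rm hom}\nabla u_0$ emerges, matching \eqref{eq:eq5}; on $\Omega_2$ one directly gets $-\nabla\cdot a_2\nabla u_0=\lambda_0 u_0$ from \eqref{eq:eq4}; and the interface/conormal-continuity conditions \eqref{limsystem}, \eqref{316} make the boundary terms on $\partial\Omega_2$ cancel to leading order. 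What is left over consists of (a) truncation errors of the two-scale ansatz, supported away from $\partial\Omega_2$, which are $O(\ve)$ in the appropriate norm because of the exponential decay of $u_0$ and the smoothness of $V$ and $N_j$; and (b) genuine boundary-layer errors concentrated in the $O(\ve)$-neighbourhood of $\partial\Omega_2$ and in the cut inclusions $\tilde\Omega_0^\ve$, where the ansatz \eqref{eq:u3} is not a consistent two-scale expansion. Each error term must be bounded by $C\ve^{1/2}\|w\|$ with the relevant energy norm; for the boundary-layer pieces one uses Cauchy--Schwarz together with the standard estimate that a tubular neighbourhood of $\partial\Omega_2$ of width $O(\ve)$ has measure $O(\ve)$, hence contributes $O(\ve^{1/2})$ after taking a square root — and this is where the ``extension lemma'' of Proposition \ref{propa1} is invoked, to control $\|\nabla w\|_{L_2}$ over the matrix phase near the defect by $b_\ve(w,w)^{1/2}$ uniformly in $\ve$.

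The main obstacle, as the authors themselves flag, is the high-contrast boundary-layer analysis near $\partial\Omega_2$: unlike in classical homogenization, the coefficient on the cut inclusions $\tilde\Omega_0^\ve$ degenerates (or is merely bounded by \eqref{tildea0bounds}) while the neighbouring matrix keeps an $O(1)$ coefficient, so one cannot simply glue the interior two-scale ansatz to the defect-side solution $u_0|_{\Omega_2}$ without incurring an $O(1)$ mismatch across a strip of thickness $O(\ve)$. The resolution is to absorb this mismatch into the $w$-dependent right-hand side and estimate it \emph{only} in the $L_2$ sense (not $H^1$), which is exactly why the final bound \eqref{eq:main} is stated in $L_2$ rather than in energy norm: the $O(\ve^{-1/2})$ blow-up of gradients in the boundary layer is compensated by the $O(\ve^{1/2})$ measure of its support only at the level of the form paired against a general $w$, via the extension lemma and a trace/Poincar\'e inequality on the boundary cells in $\tilde Q_0^\ve$. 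Once Lemma \ref{lemmatech} is in hand, the passage to \eqref{eq:main} is routine: choose $w=U_1^\ve-\tilde u^\ve$ in \eqref{lemmatech1}, obtain $b_\ve(U_1^\ve-\tilde u^\ve,U_1^\ve-\tilde u^\ve)\le C\ve$, hence $\|U_1^\ve-\tilde u^\ve\|_{L_2(\mathbb R^n)}\le C\ve^{1/2}$ since the $L_2$ norm is dominated by $b_\ve(\cdot,\cdot)^{1/2}$, and finally $\|U_\ve u^0-U_1^\ve\|_{L_2(\mathbb R^n)}=O(\ve)$ because the added corrector $\varepsilon N_j(x/\ve)u_{0,j}(x)$ is itself $O(\ve)$ in $L_2$.
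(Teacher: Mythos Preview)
Your outline for the Lemma itself is essentially the paper's: rewrite $b_\ve(w,U_1^\ve-\tilde u^\ve)=b_\ve(w,U_1^\ve)-(f^\ve,w)$ via \eqref{eq:weeksol}, split the form over the four subdomains, integrate by parts region by region (deferring to Propositions \ref{proptildea0}--\ref{propa2}), identify the main-order cancellations through the limit equations \eqref{330}--\eqref{limsystem} and \eqref{eq:V}, and bound the boundary-layer remainders by $C\ve^{1/2}b_\ve(w,w)^{1/2}$ using the $O(\ve)$ measure of the layer together with the extension $\hat w$ of \eqref{what}.

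However, your final paragraph contains a genuine gap --- strictly speaking in the passage to Theorem~\ref{l:estim} rather than in the Lemma, but it is not ``routine'' as you claim. You propose to take $w=U_1^\ve-\tilde u^\ve$ in \eqref{lemmatech1}, but $U_1^\ve\notin H^1(\mathbb R^n)$: by \eqref{eq:u3}, $U_1^\ve=u_0+\ve N_j(x/\ve)u_{0,j}$ on $\Omega_1^\ve$ while $U_1^\ve=u_0$ on the adjacent $\Omega_2\cup\tilde\Omega_0^\ve$, so it has a trace jump of size $\ve N_j(x/\ve)u_{0,j}$ across portions of $\partial\Omega_2$ and $\partial\tilde\Omega_0^\ve$ and is only piecewise $H^1$ (the paper remarks on this just after \eqref{eq:form}). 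Hence $U_1^\ve-\tilde u^\ve$ is not an admissible test function for \eqref{lemmatech1}. This is precisely why the paper introduces a \emph{second} corrected approximation $U_2^\ve$, see \eqref{eq:3}, in which the corrector $\ve N_j(x/\ve)u_{0,j}$ is multiplied by the cutoff $\chi_\ve$ vanishing near $\partial\Omega_2$; then $U_2^\ve\in H^1(\mathbb R^n)$, one takes $w=U_2^\ve-\tilde u^\ve$, and one must additionally verify $b_\ve(U_1^\ve-U_2^\ve,U_1^\ve-U_2^\ve)\le C\ve$ and combine via \eqref{eq:est3}--\eqref{eq:est2}. Without this extra cutoff step the argument does not close.
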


{\bf Proof of Theorem \ref{l:estim}:}

Assume Lemma \ref{lemmatech} is valid. For any $\ve>0$, select 
$w=w^\ve(x):=U_2^\ve(x)-\tilde u^\ve(x)$, where $U_2^\ve$ is another
``corrected'' approximation constructed as follows:
 \begin{equation} U^\ve_2(x)=
    \left\{
      \begin{array}{ll}
        U_\varepsilon u^0(x)\,=u_0(x), & x\in \Omega_2,
          \\
           U_\varepsilon u^0(x)+\varepsilon\chi_\varepsilon(x)N_j(x/\varepsilon)u_{0,j}(x), 
& x\in \Omega_1^\varepsilon \cup
           \Omega_0^\varepsilon\cup\tilde\Omega_0^\varepsilon. 
      \end{array} \right.
      \label{eq:3}
\end{equation}
Here $\varepsilon N_j(x/\varepsilon)u_{0,j}(x)$ is the first order
corrector, constructed everywhere outside the defect, with (for example) a harmonic extension of 
$N(y)$ onto $Q_0$, 
\be
\chi_\varepsilon(x):=
\chi(\text{dist}(x,\partial\Omega_2)\varepsilon^{-1})
\label{chi}
\ee
 and $\chi(t)$ is a cut-off
function:  $\chi \in
C^\infty(\mathbb{R})$; $\chi(t)=0, t<1/2$ and $\chi(t)=1, t>1$.
Notice that both $U^\ve_2$ and $\tilde u^\ve$ are
in $H^1(\mathbb{R}^n)$. Hence by Lemma \ref{lemmatech},
\be
\left\vert b_\ve\left(U^\ve_2-\tilde u^\ve, U_1^\ve-\tilde u^\ve\right)\right\vert\,\leq\,
C\ve^{1/2}b_\ve(U^\ve_2-\tilde u^\ve,U^\ve_2-\tilde u^\ve)^{1/2},
\label{prth1}
\ee
with $C$ denoting henceforth constants  independent of
 $\varepsilon$ whose precise value is insignificant and can change form line to line.

On the other hand, by the non-negativity of the ``extended'' quadratic form, obviously,
 \be
             b_\varepsilon(U^\ve_2-\tilde{u}^\varepsilon,U^\ve_2-\tilde{u}^\varepsilon)\,\leq\,
             2\left|b_\varepsilon(U^\ve_2-\tilde{u}^\varepsilon,U^\ve_1-\tilde{u}^\varepsilon)\right|\,+\, 
b_\varepsilon(U^\ve_1-U^\ve_2, U^\ve_1-U^\ve_2).
   \label{eq:est3}
   \ee
Notice next that from \eqref{eq:u3} and \eqref{eq:3}
\be
U^\ve_1(x)-U^\ve_2(x)=
    \left\{
      \begin{array}{lll}
        0, & x\in \Omega_2, 
\\
        -\chi_\varepsilon(x)\varepsilon N_j(x/\varepsilon)u_{0,j}(x), & x\in
           \Omega_0^\varepsilon\cup\tilde\Omega_0^\varepsilon,
          \\
        (1-\chi_\varepsilon(x))\varepsilon N_j(x/\varepsilon)u_{0,j}(x), 
			& x\in
            \Omega_1^\varepsilon.
      \end{array} \right.
      \label{eq:u3new}
 \ee
Then, due to the small size (of order $\ve$ near $\partial\Omega_2$) of the support of
$1\,-\,\chi(\text{dist}(x,\partial\Omega_2)\varepsilon^{-1})$ as well as of $\tilde\Omega_0^\ve$,
and to the regularity and exponential decay of $u_0(x)$, 
we conclude that
   \be
             b_\varepsilon(U^\ve_1-U^\ve_2,U^\ve_1-U^\ve_2)
             \leq C \varepsilon.
   \label{eq:est2}
   \ee
Combining \eqref{eq:est3} with \eqref{prth1} and \eqref{eq:est2} implies:
\[
b_\varepsilon(U^\ve_2-\tilde{u}^\varepsilon,U^\ve_2-\tilde{u}^\varepsilon)\leq
2C\ve^{1/2}b_\varepsilon(U^\ve_2-\tilde{u}^\varepsilon,U^\ve_2-\tilde{u}^\varepsilon)^{1/2}\,+\,C\ve\,\leq
\]
\[
\frac{1}{ 2}b_\varepsilon(U^\ve_2-\tilde{u}^\varepsilon,U^\ve_2-\tilde{u}^\varepsilon)+
(2C^2+C)\ve,
\]
which yields, via \eqref{eq:form}, 
\be
\Vert U^\ve_2-\tilde{u}^\varepsilon\Vert^2_{L^2(\mathbb{R}^n)}\leq
b_\varepsilon(U^\ve_2-\tilde{u}^\varepsilon,U^\ve_2-\tilde{u}^\varepsilon)\leq C \ve.
\label{thpr2}
\ee
Notice finally that from \eqref{eq:3}, the boundedness of $N_j$ and $\chi_\ve$
as well as boundedness and exponential decay of $u_{0,j}$, 
\[
\Vert U_\ve u^0- U^\ve_2\Vert^2_{L^2(\mathbb{R}^n)}\,\leq \,C \ve.
\]
This together with \eqref{thpr2} implies via the triangle inequality that
\[
\Vert U_\ve u^0- \tilde{u}^\varepsilon\Vert_{L^2(\mathbb{R}^n)}\leq \,C \ve^{1/2},
\]
with appropriate constant $C$. This establishes \eqref{eq:main} and hence proves
the theorem. $\square$
\vspace{.2in}

{\bf Proof of Lemma \ref{lemmatech}:}

First, using \eqref{eq:weeksol} the entity in the left hand side of \eqref{lemmatech}
can be evaluated  as follows:
  \be
        b_\varepsilon(w,U_1^\ve-\tilde{u}^\varepsilon)\,=\, 
        b_\varepsilon(w,U_1^\ve)-(w,f^\varepsilon)_{\mathbb{R}^n}\,=\,I_1(\ve)\,+\,I_2(\ve),
 \label{eq:est6}
 \ee
where
 \be
        I_1(\ve)\,:=\,\,b_\varepsilon(w,U_1^\ve)
           \,=\,\sum_{j=0}^2
                       \int_{\Omega_j^\varepsilon} a_j(\varepsilon)\nabla w\cdot\nabla U_1^\ve
                       \,dx\,
                       +\int_{\tilde{\Omega}_0^\varepsilon} \tilde a_0(\varepsilon)\nabla w\cdot\nabla
U_1^\ve \,dx\,
                       +\int_{\mathbb{R}^n}wU_1^\ve \,dx,
 \label{eq:i1}
 \ee
and, via \eqref{eq:deff},
\be
I_2(\ve):=
     -(w,f^\varepsilon)_{\mathbb{R}^n}=
    -(\lambda_0+1)
    \left(\int_{\Omega_0^\varepsilon} w u_0(1+V)dx
    +\int_{\tilde{\Omega}_0^\varepsilon}w u_0dx
    +\int_{\Omega_2} w u_0dx
    +\int_{\Omega_1^\varepsilon}wu_0dx\right).
 \label{eq:500}
 \ee

It is further convenient to break $I_1(\ve)$ into four separate terms for the
four integration domains:
\be
I_1(\ve)=\,\tilde A_0(\ve) + \sum_{j=0}^2 A_j(\ve),
\label{i1four}
\ee
where
 \be
        \tilde A_0(\ve)\,:\,=
\int_{\tilde\Omega_0^\varepsilon} \tilde a_0(\ve)\nabla w\,\cdot\nabla U_1^\ve
                       dx
                       \,+\int_{\tilde\Omega_0^\varepsilon} wU_1^\ve
                       \,dx,
 \label{eq:tildea0}
 \ee
and
 \begin{eqnarray}
        A_0(\ve)&:\,=&\int_{\Omega_0^\varepsilon} \varepsilon^2a_0\nabla w\,\cdot\nabla U_1^\ve
                       dx
                       \,+\int_{\Omega_0^\varepsilon} wU_1^\ve
                       dx,
 \label{eq:a0}\\
 A_j(\ve)&:\,=&\int_{\Omega_j^\varepsilon} a_j\nabla w\,\cdot\nabla U_1^\ve
                       dx
                       \,+\int_{\Omega_j^\varepsilon} wU_1^\ve
                       dx, \,\,\,j=1,2.
\label{eq:a12}
 \end{eqnarray}
We will be separately estimating $\tilde A_0(\ve)$, $A_j(\ve)$,
$j=0,1,2,$ and then $I_2(\ve)$ in the series of the following
propositions, and will subsequently derive \eqref{lemmatech1} by
combining all these estimates.

\bigskip
\bigskip

\begin{prop}\label{proptildea0}
 \be
    \left|\tilde A_0(\ve)\right|
    \leq C\varepsilon^{1/2} b_\varepsilon(w,w)^{1/2}. 
 \label{eq:tilde}
 \ee
\end{prop}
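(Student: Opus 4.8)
The plan is to estimate $\tilde A_0(\ve)$, defined in \eqref{eq:tildea0} as an integral over the ``cut inclusions'' $\tilde\Omega_0^\ve$, purely by crude size and regularity bounds, exploiting that $\tilde\Omega_0^\ve$ is contained in an $O(\ve)$-neighbourhood of $\partial\Omega_2$ and hence has volume $O(\ve)$. On $\tilde\Omega_0^\ve$ the corrected approximation $U_1^\ve$ coincides with $u_0(x)$ by the first line of \eqref{eq:u3}, so $\nabla U_1^\ve=\nabla u_0$ there, and both $u_0$ and $\nabla u_0$ are bounded on $\tilde\Omega_0^\ve$ by the smoothness and exponential decay of $u_0$ established just before \eqref{eq:u3}. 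First I would apply the Cauchy--Schwarz inequality to each of the two terms in \eqref{eq:tildea0}:
\be
\left|\tilde A_0(\ve)\right|\,\leq\,
\left(\int_{\tilde\Omega_0^\varepsilon}\tilde a_0(\ve)|\nabla w|^2\,dx\right)^{1/2}
\left(\int_{\tilde\Omega_0^\varepsilon}\tilde a_0(\ve)|\nabla U_1^\ve|^2\,dx\right)^{1/2}
+\left(\int_{\tilde\Omega_0^\varepsilon}|w|^2\,dx\right)^{1/2}
\left(\int_{\tilde\Omega_0^\varepsilon}|U_1^\ve|^2\,dx\right)^{1/2}.
\ee

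Next I would bound the two factors involving $w$ from above by $b_\ve(w,w)^{1/2}$: indeed, by the definition \eqref{eq:form} of the form, $\int_{\tilde\Omega_0^\varepsilon}\tilde a_0(\ve)|\nabla w|^2\,dx\leq b_\ve(w,w)$ and $\int_{\tilde\Omega_0^\varepsilon}|w|^2\,dx\leq\int_{\mathbb{R}^n}|w|^2\,dx\leq b_\ve(w,w)$, since every term in $b_\ve(w,w)$ is non-negative. For the factors involving $U_1^\ve$ I would use $U_1^\ve=u_0$ on $\tilde\Omega_0^\ve$, the bound \eqref{tildea0bounds} giving $\tilde a_0(\ve)\leq\tilde A_0$, and the uniform bounds $\|u_0\|_{L^\infty}$, $\|\nabla u_0\|_{L^\infty}$ (restricted to a fixed neighbourhood of $\partial\Omega_2$, which is where all the relevant volume sits). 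This yields
\[
\int_{\tilde\Omega_0^\varepsilon}\tilde a_0(\ve)|\nabla U_1^\ve|^2\,dx\,\leq\,
\tilde A_0\|\nabla u_0\|_{L^\infty}^2\,|\tilde\Omega_0^\ve|,
\qquad
\int_{\tilde\Omega_0^\varepsilon}|U_1^\ve|^2\,dx\,\leq\,
\|u_0\|_{L^\infty}^2\,|\tilde\Omega_0^\ve|.
\]

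The remaining point is the volume estimate $|\tilde\Omega_0^\ve|\leq C\ve$. This follows because $\tilde\Omega_0^\ve$ consists of (parts of) inclusions of diameter $O(\ve)$ each sitting in a cell of size $\ve$, and only cells meeting $\partial\Omega_2$ contribute; since $\partial\Omega_2$ is a smooth compact hypersurface, the number of such cells is $O(\ve^{-(n-1)})$, each of volume $O(\ve^n)$, so the total volume is $O(\ve)$. Combining, each product in the displayed Cauchy--Schwarz bound is $\leq b_\ve(w,w)^{1/2}\cdot C\,|\tilde\Omega_0^\ve|^{1/2}\leq C\ve^{1/2}b_\ve(w,w)^{1/2}$, which is \eqref{eq:tilde}. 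I do not expect any genuine obstacle here: the only mild subtlety is the counting argument for $|\tilde\Omega_0^\ve|=O(\ve)$, which is standard for smooth boundaries and already implicitly used around \eqref{eq:est2}; everything else is Cauchy--Schwarz together with the a priori regularity and decay of $u_0$. Note in particular that it is precisely the smallness of $|\tilde\Omega_0^\ve|$, rather than any smallness of $\tilde a_0(\ve)$, that is being used, which is why the weak hypothesis \eqref{tildea0bounds} suffices.
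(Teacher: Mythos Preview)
Your proof is correct and follows essentially the same approach as the paper: Cauchy--Schwarz on each term, the volume bound $|\tilde\Omega_0^\ve|\leq C\ve$, the upper bound \eqref{tildea0bounds} on $\tilde a_0(\ve)$, and the $L^\infty$-boundedness of $U_1^\ve=u_0$ and $\nabla U_1^\ve=\nabla u_0$ on $\tilde\Omega_0^\ve$ via \eqref{eq:u3}. Your version is in fact slightly more detailed, spelling out the cell-counting argument for the volume estimate and the domination of the $w$-factors by $b_\ve(w,w)^{1/2}$, both of which the paper leaves implicit.
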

\begin{proof}
Notice that the measure of  $\tilde\Omega_0^\varepsilon$ is bounded by
$C\varepsilon$. As a result, using 
Cauchy-Schwartz inequality,
\begin{eqnarray*}
    \left|\int_{\tilde{\Omega}_0^\varepsilon}
          \tilde a_0(\varepsilon)\nabla w\cdot\nabla U_1^\ve \,dx\right|&\leq&
\left(\int_{\tilde{\Omega}_0^\varepsilon}
          \tilde a_0(\varepsilon)\nabla w\cdot\nabla w\,dx\right)^{1/2}
\left(\int_{\tilde{\Omega}_0^\varepsilon}
         \tilde a_0(\varepsilon)\left|\nabla U_1^\ve\right|^2 dx\right)^{1/2}\leq
\nonumber\\
&&
C\varepsilon^{1/2} b_\varepsilon(w,w)^{1/2},
\end{eqnarray*}
where we have used \eqref{tildea0bounds} and the  $L^\infty$-boundedness of
$\nabla U_1^\ve$ in $\tilde\Omega_0^\ve$ via \eqref{eq:u3} and the boundedness of $u_0$. 
The second integral in
\eqref{eq:tildea0} is bounded similarly, which leads to
\eqref{eq:tilde}.
\end{proof}
\vspace{.15in}

Consider next the integrals over $\Omega_1^\ve$ in \eqref{eq:a12}:
 \be
        A_1(\ve)=\int_{\Omega_1^\varepsilon} a_1\nabla w\cdot\nabla U_1^\ve 
                       dx
                       +\int_{\Omega_1^\varepsilon} w U_1^\ve
                       dx.
 \label{eq:a1}
 \ee
Before formulating the corresponding result for $A_1(\ve)$, we need
to use the following technical construction. One can extend any function
$w$ from
$H^1\left(\mathbb{R}^n\setminus(\Omega_0^\varepsilon\cup\tilde{Q}_0^\varepsilon)\right)$
(let us remind that $\tilde Q_0^\ve$ is the set of all the
inclusions in $Q_0^\varepsilon$ which
 intersect with the boundary $\partial\Omega_2$ of $\Omega_2$, see Section 2) into the whole of $H^1\left(\mathbb{R}^n\right)$,
controlling its norm ``uniformly'' with respect to $\ve$. More
precisely, for any $\ve$ and any $w\in H^1\left(\mathbb{R}^n\setminus(\Omega_0^\varepsilon\cup\tilde{Q}_0^\varepsilon)\right)$
 there exists a function $\hat{w}\in H^1(\mathbb{R}^n)$ such that
\be
\hat{w}(x)=w(x), \
    x\in \mathbb{R}^n\setminus(\Omega_0^\varepsilon\cup\tilde{Q}_0^\varepsilon) \ \ \text{and}\ \
    \|\hat{w}\|_{H^1(\mathbb{R}^n)}\leq C
    \|w\|_{H^1(\mathbb{R}^n\setminus(\Omega_0^\varepsilon\cup\tilde{Q}_0^\varepsilon))},
\label{what} \ee where $C$ does not depend on $\ve$ and $w$.  The above
follows e.g. via a straightforward modification of the so called
``extension lemma'', see e.g. \cite{JKO} \S 3.1 Lemma 3.2 which uses the extension
construction, see for the latter e.g. \cite{St} \S 6.3.1, p.181, Theorem 5.

\begin{prop}\label{propa1}

 \be
        A_1(\ve)=(\beta(\lambda_0)+|Q_1|)\int_{\mathbb{R}^n\backslash\Omega_2}\hat{w }u_0dx
        +\int_{\partial\Omega_2}\hat{w} n_iA_{ij}^{\rm hom}u_{0,j}dS
        +\hat{{A}}_1(\ve),
 \label{eq:a10300}
 \ee
where
 \be
    |\hat{{A}}_1(\ve)|\leq C\varepsilon^{1/2} b_\varepsilon(w,w)^{1/2}.
\label{b32}
 \ee

\end{prop}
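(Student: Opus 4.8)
The plan is to integrate by parts in the bulk term $\int_{\Omega_1^\ve}a_1\nabla w\cdot\nabla U_1^\ve\,dx$, exploiting the fact that on $\Omega_1^\ve$ the corrected approximation is $U_1^\ve=u_0(x)+\ve N_j(x/\ve)u_{0,j}(x)$, which is precisely the two-term classical homogenization ansatz for the ``soft inclusions'' (perforated-domain) problem with effective matrix $A^{\rm hom}$ and cell correctors $N_j$ solving \eqref{eq:N}. First I would replace $w$ in $\Omega_1^\ve$ by its $\ve$-uniform extension $\hat w\in H^1(\mathbb R^n)$ from \eqref{what}, noting that $\|\hat w\|_{H^1(\mathbb R^n)}\le C\|w\|_{H^1(\mathbb R^n\setminus(\Omega_0^\ve\cup\tilde Q_0^\ve))}\le C\,b_\ve(w,w)^{1/2}$, so that all subsequent error terms estimated in terms of $\|\hat w\|$ or $\|\nabla\hat w\|$ on small sets are automatically controlled by $\ve^{1/2}b_\ve(w,w)^{1/2}$. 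The difference $\int_{\Omega_1^\ve}a_1\nabla w\cdot\nabla U_1^\ve - \int_{\Omega_1^\ve}a_1\nabla\hat w\cdot\nabla U_1^\ve$ vanishes since $w=\hat w$ on $\Omega_1^\ve$; the point of the extension is to be able to integrate by parts against the \emph{extended} test function over the nice domain $\mathbb R^n\setminus\Omega_2$.

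Next I would carry out the standard corrector computation: write $a_1\nabla_x\big(u_0+\ve N_j(x/\ve)u_{0,j}\big)$, expand, and observe that the $O(1)$ divergence collecting $-\partial_{y_i}\big(a_1(\delta_{ij}+\partial_{y_i}N_j)\big)u_{0,j}$ vanishes by the cell equation \eqref{eq:N}, while the remaining flux, after averaging the rapidly oscillating coefficients over the cell (which is where an $O(\ve^{1/2})$ boundary-layer error is incurred near $\partial\Omega_2$ because the periodic cells do not fit the curved interface), produces the homogenized flux $A^{\rm hom}_{ij}u_{0,j}$ plus $O(\ve)$ remainders supported where $\nabla^2 u_0$ is large or in a boundary strip. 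Integration by parts over $\mathbb R^n\setminus\Omega_2$ then yields the volume term $\int_{\mathbb R^n\setminus\Omega_2}(-\nabla\cdot A^{\rm hom}\nabla u_0)\,\hat w\,dx$ and a boundary term on $\partial\Omega_2$ with conormal $n_iA^{\rm hom}_{ij}u_{0,j}$; using \eqref{eq:eq5}, i.e. $-\nabla\cdot A^{\rm hom}\nabla u_0=\beta(\lambda_0)u_0$, turns the volume term into $\beta(\lambda_0)\int_{\mathbb R^n\setminus\Omega_2}\hat w\,u_0$. Finally, combining with the lower-order term $\int_{\Omega_1^\ve}w U_1^\ve\,dx$: since $|Q_1^\ve|/|\mathbb R^n$-cell$|=|Q_1|$ and $U_1^\ve=u_0+O(\ve)$ there, this integral equals $|Q_1|\int_{\mathbb R^n\setminus\Omega_2}\hat w\,u_0\,dx$ up to $O(\ve)\|\hat w\|$ (the characteristic function of $\Omega_1^\ve$ two-scale converges weakly to its mean $|Q_1|$, with the $H^1$-oscillation correction being $O(\ve)$ against a fixed smooth weight $u_0$), which supplies the $|Q_1|$ in the coefficient of \eqref{eq:a10300}.

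The various error contributions I would then collect into $\hat A_1(\ve)$ are: (a) the boundary-layer mismatch near $\partial\Omega_2$ where cells straddle the interface — bounded by (measure of an $O(\ve)$ strip)$^{1/2}$ times $\|\nabla\hat w\|_{L^2}$, hence $O(\ve^{1/2})b_\ve(w,w)^{1/2}$, and this is the genuinely delicate, only-$O(\ve^{1/2})$ piece; (b) terms with $\ve\nabla N_j(x/\ve)\cdot\nabla\hat w$ integrated against $u_{0,j}$ where the oscillating factor has mean zero — here one gains an extra $\ve$ after integrating by parts the $\hat w$ against a bounded antiderivative of the mean-zero periodic function, giving $O(\ve)$; (c) remainders involving $\ve N_j u_{0,jk}$ and the $L^2$ tail from exponential decay of $u_0$, all $O(\ve)$. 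Summing, $|\hat A_1(\ve)|\le C\ve^{1/2}b_\ve(w,w)^{1/2}$, which is \eqref{b32}. The main obstacle is step (a): controlling the boundary layer near the curved interface $\partial\Omega_2$ in the high-contrast setting, where one cannot explicitly construct the boundary-layer corrector and must instead absorb its effect into the order-optimal $\ve^{1/2}$ energy bound; this is exactly the point flagged in the introduction as the novel technical ingredient, and it is handled by a strip estimate combined with the extension lemma rather than by solving an auxiliary boundary-layer problem.
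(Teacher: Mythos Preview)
Your proposal is essentially correct and follows the same route as the paper: pass to the extension $\hat w$, write the flux $a_1\nabla U_1^\ve$ on $\Omega_1^\ve$ as $A^{\rm hom}\nabla u_0$ plus an oscillating mean-zero corrector plus $O(\ve)$ terms, integrate by parts over $\mathbb R^n\setminus\Omega_2$ to produce the volume term (converted via \eqref{eq:eq5} to $\beta(\lambda_0)\int\hat w u_0$) and the conormal boundary term, then extract the $|Q_1|$ factor from the lower-order integral $\int_{\Omega_1^\ve}w u_0$ via the volume fraction. Your identification of the $\ve^{1/2}$ boundary-layer strip near $\partial\Omega_2$ as the bottleneck, handled by the extension lemma rather than an explicit boundary-layer corrector, matches the paper exactly.

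One imprecision to flag in step (b): ``mean zero'' alone is not the property that lets you gain the extra $\ve$ without putting a second derivative on $\hat w$ (which is only $H^1$). The flux corrector $g_i^j(y):=\mu(y)a_1(\delta_{ij}+N_{j,i})-A_{ij}^{\rm hom}$ is not only mean-zero but also \emph{divergence-free} in $y$ (by the cell problem \eqref{eq:N}), and the paper exploits this by writing $g_i^j=\partial_k G_{ik}^j$ with a \emph{skew-symmetric} periodic potential $G_{ik}^j\in H^1(\square)$. Then $\int_{\mathbb R^n\setminus\Omega_2}\hat w_{,i}\,\ve\partial_{x_k}(\chi_\ve G_{ik}^j u_{0,j})\,dx$ vanishes identically by skew-symmetry (no $H^2$ needed on $\hat w$), and the residual $(1-\chi_\ve)$-piece lives on the $O(\ve)$ strip, giving precisely your $O(\ve^{1/2})$ error (a). Your ``bounded antiderivative of a mean-zero periodic function'' is pointing at $G$, but without the skew-symmetric structure the integration by parts you describe would land a derivative on $\nabla\hat w$ and stall. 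Similarly, for the lower-order term the paper makes the $|Q_1|$ extraction quantitative via $\mu=|Q_1|+\Delta_y M$ with $M\in H^2(\square)$, which is the concrete form of your ``$H^1$-oscillation correction''.
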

\begin{proof}
1. Let  $\mu^\varepsilon$ be characteristic function of
$\Omega_1^\varepsilon$. We can rewrite the
first integral in \eqref{eq:a1} via \eqref{eq:u3}  
as follows
 \be
    \int_{\Omega_1^\varepsilon} a_1\nabla w\cdot\nabla U_1^\ve
                       dx=
                       \int_{\mathbb{R}^n\backslash\Omega_2} \nabla
                       \hat w\cdot p_1^\varepsilon
                       dx,
    \label{eq:int1000}
 \ee
where
 \begin{equation}
        (p_1^\varepsilon(x))_i:=\mu^\varepsilon(x) a_1\biggl(u_{0,i}(x)+ N_{j,i}(y)u_{0,j}(x)
         +\varepsilon N_{j}(y)u_{0,ji}(x)\biggr), 
				\ x\in \mathbb{R}^n\backslash\Omega_2 , \,\,y=x/\varepsilon.
 \label{eq:p1}
 \ee
(Henceforth $p_i$ denotes the $i$-th component of the appropriate vector 
field $p(x)$.)
 The above flow $(p_1^\varepsilon(x))_i$ can be re-written in the following form
\begin{equation}
        (p_1^\varepsilon(x))_i=A_{ij}^{\rm hom}u_{0,j}+g_i^j(x/\varepsilon)u_{0,j}+\ve\mu^\varepsilon a_1
         N_{j}(x/\varepsilon)u_{0,ji}(x).
\label{eq:p1001}
 \ee
Here
 \be
        g_i^j(y):=\mu(y) a_1(\delta_{ij}+
        N_{j,i}(y))-A_{ij}^{\rm hom}, \ \ \  y\in Q,
\label{gij}
 \ee
$\mu$ is the characteristic function of $Q_1$, and $A_{ij}^{\rm hom}$ are the entries
of the homogenized matrix $A^{\rm hom}$, see \eqref{eq:A}, and
 $N_j(y)$ are here assumed extended by zero on $Q_0$. It follows then from
\eqref{eq:N}  that vector field $g_i^j\in \left[L^2(Q)\right]^n$ (with fixed $j$) is divergence
free in the whole of the periodicity cell $Q$ in the following (weak) sense:
 \be
 \int_Q \,\partial_i\psi\,  g_i^j\,dy\,=\,0,\,\, \ \  \forall \psi \in
 H^1(\square) 
\label{divfree}
 \ee
($ H^p(\square)$ stands for the closure in  $H^p(Q)$ of all $Q$-periodic $C^\infty$ 
functions and $\partial_i:=\partial/\partial y_i$). Since \eqref{gij} and \eqref{eq:A} also 
imply that $g_i^j$ have zero mean value over $Q$, they  can be 
rewritten as ``divergences'' of skew-symmetric field $G^j_{ik}(y)\in H^1(\square)$ (which is in fact a 
generalization of the curl operation to arbitrary $n$, see e.g. \cite{JKO} \S 1.1 pp. 6-7):
  \be
        g_i^j(y)=\partial_k G_{ik}^j(y),\ \ \ \ 
        G_{ik}^j(y)\,=\,-\,\,G_{ki}^j(y). 
 \label{eq:G}
 \ee\

 Consequently \eqref{eq:p1001} can be rewritten as
\begin{equation}
        (p_1^\varepsilon(x))_i=A_{ij}^{\rm hom}u_{0,j}(x)+\varepsilon
\frac{\partial}{ \partial x_k} \biggl(G_{ik}^j(x/\ve)u_{0,j}(x)\biggr)+\ve\mu^\varepsilon a_1
       N_{j}(x/\varepsilon)u_{0,ji}(x)\,\,-
\label{eq:p1020}
 \ee
 $$
\varepsilon G_{ik}^j(x/\ve)u_{0,jk}(x), \,
         \, x\in \mathbb{R}^n\backslash\Omega_2. 
 $$

Function $\frac{\partial}{\partial x_k} (G_{ik}^ju_{0,j})$ from \eqref{eq:p1020} is
not divergence free in $\mathbb{R}^n\backslash \Omega_2$.
We introduce its  ``divergence free modification''
$\frac{\partial}{\partial x_k} (\chi_\ve G_{ik}^ju_{0,j})$
 which differs from $\frac{\partial}{\partial x_k} (G_{ik}^ju_{0,j})$
insignificantly by employing again the cut-off function
$\chi_\ve(x)$, see  \eqref{chi}. As a result,
\begin{equation}
        (p_1^\varepsilon(x))_i=A_{ij}^{\rm hom}u_{0,j}(x)+
\varepsilon\frac{\partial}{\partial x_k} \biggl(\chi_\varepsilon G_{ik}^j(x/\ve)u_{0,j}(x)\biggr)+
\varepsilon\mu^\varepsilon a_1
          N_{j}(x/\varepsilon)u_{0,ji}(x)\,+ \biggr.
\label{eq:p1030}
 \ee
 $$
 \biggl.   
		\left({{r_1}}^\varepsilon(x)\right)_i, \,\,\,
         \, x\in \mathbb{R}^n\backslash\Omega_2,
 $$
where
 \be
    \left({{r_1}}^\varepsilon(x)\right)_i\,=\, \varepsilon\frac{\partial}{\partial x_k}
\biggl((1-\chi_\varepsilon(x) )G_{ik}^j(x/\ve)u_{0,j}(x)\biggr)\,-\,
    \varepsilon \,G_{ik}^j(x/\ve)u_{0,jk}(x). 
 \label{eq:r1010}
 \ee 

2. Integrating by parts in \eqref{eq:int1000} and using \eqref{eq:p1030}, \eqref{eq:G} 
and \eqref{divfree}
we obtain
 \be
    \int_{\Omega_1^\varepsilon } a_1\nabla w\cdot\nabla U_1^\ve
                       dx=
                       \int_{\mathbb{R}^n\backslash\Omega_2} \nabla
                       \hat{w}\cdot p_1^\varepsilon
                       dx=
                       -\int_{\mathbb{R}^n\backslash\Omega_2}
                       \hat{w} A_{ij}^{\rm hom}u_{0,ij}dx
  \label{eq:int10100}
 \ee
 $$
    +\int_{\partial\Omega_2}
                       \hat{w} n_iA_{ij}^{\rm hom}u_{0,j}dS
\ \ + \ \ R^\ve_1\,+\,R^\ve_2, \,\,\,\,\, \ \  
 $$
with the ``remainders''
 \be
    R^\ve_1\,:=\,\int_{\mathbb{R}^n\backslash\Omega_2}
           \nabla \hat w\cdot {{r_1}}^\varepsilon
        dx,
    \label{eq:R1}
 \ee
 \be
    R^\ve_2\,:=\,\int_{\Omega_1^\varepsilon}
                     w_{,i}\, \varepsilon a_1\,
        N_{j}(x/\ve)u_{0,ji}(x)\,dx. 
 \label{eq:R2}
 \ee

We argue that both remainders are ``small''. 
 Let us consider $R^\ve_1$.  The only term of ``order one'' in $R_1^\ve $ is that corresponding to 
$\varepsilon\frac{\partial}{\partial x_k}\left((1-\chi^\varepsilon
)G_{ik}^j
\right)$, see  
\eqref{eq:r1010}, however the size of the support of 
$(1-\chi^\varepsilon)$ is of order $\varepsilon$. 
So we can apply the Cauchy-Schwartz inequality to each term for \eqref{eq:r1010} in \eqref{eq:R1} 
and use the fact 
that $G^j_{ik}$ is $\ve$-periodic and hence its $H^1$ norm over the support of $(1-\chi^\varepsilon)$ 
is bounded by $C\ve^{-1/2}$. As a result, 
\be
 |R^\ve_1|\leq \varepsilon^{1/2}\|\hat w\|_{H^1({\mathbb R}^n)}.
\label{r1eps0}
\ee
(Having also noticed that the remaining, ``order $\ve$'', terms in $R^\ve_1$ contribute only
order $\ve$ terms into the right hand side of \eqref{r1eps0}, upon straightforward
application of the Cauchy-Schwartz inequality and the exponential decay of $u_0$.) 
Finally, combining \eqref{r1eps0} with the extension bound \eqref{what} we conclude that 
\be
 |R^\ve_1|\,\leq \,C\varepsilon^{1/2}b_\varepsilon( w, w)^{1/2}.
\label{r1eps}
\ee

Notice next that 
$R^\ve_2$ is of order $\ve$ with the exponentially
decaying $u_0$, which implies $|R^\ve_2|\leq \varepsilon b_\varepsilon(w,w)^{1/2}$.

3. Summarising, we obtain following estimate on the combined smallness of both remainders:
 \be
    |R^\ve_1+R^\ve_2|\leq C\varepsilon^{1/2} b_\varepsilon(w,w)^{1/2}.
		\label{539}
 \ee

\vspace{.2in}

As a result we can evaluate $A_1(\ve)$ (see \eqref{eq:a1} and \eqref{eq:u3}) 
 as follows
 \be
    A_1(\ve)=
        -\int_{\mathbb{R}^n\backslash\Omega_2} \hat{w} A_{ij}^{\rm hom}u_{0,ij}dx\,
        +\,\int_{\Omega_1^\varepsilon}w u_0\,dx
        +
 \label{eq:a10010}
 \ee
 $$
        +\int_{\partial\Omega_2}\hat w n_iA_{ij}^{\rm hom}u_{0,j}dS
        +\tilde{A}_1(\ve),
 $$
where
 $$
    \tilde{A}_1(\ve):=
    R^\ve_1+R^\ve_2
		+\int_{\Omega_1^\varepsilon} w 
		\varepsilon N_ju_{0,j}\,dx, 
 $$
and consequently, via \eqref{539} and the straightforward estimate for the last integral, 
 \be
    |\tilde{A}_1(\ve)|\leq C\varepsilon^{1/2} b_\varepsilon(w,w)^{1/2}.
 \ee
\bigskip

4. Let us consider the integral over $\Omega_1^\varepsilon$
in \eqref{eq:a10010}:
 \be
    \int_{\Omega_1^\varepsilon}w u_0\,dx\,\,=\,\int_{\mathbb{R}^n\backslash\Omega_2}\mu^\varepsilon w
    u_0\,dx,
 \label{eq:1000}
 \ee
regarding here $\mu^\varepsilon(x)=\mu(x/\varepsilon)$ as the characteristic
function of $\mathbb{R}^n\backslash Q_0^\varepsilon$, see Section 2. In a standard way, the associated $Q$-periodic characteristic function $\mu(y)$ can be
presented as follows:
 \be
    \mu(y)=|Q_1|+\triangle_y M(y), \ \ \  y\in Q,
		\label{543}
 \ee
where $M\in H^2(\square)$.
 Then \eqref{eq:1000} can be evaluated in the following way:
  \be
    \int_{\mathbb{R}^n\backslash\Omega_2}
          \mu^\varepsilon w u_0\,dx\,
    =\int_{\mathbb{R}^n\backslash\Omega_2}
         \mu^\varepsilon \hat{w} u_0\,dx=\,
    \int_{\mathbb{R}^n\backslash\Omega_2}
          (|Q_1|+\varepsilon^2\triangle_x M(x/\varepsilon))\hat{w}u_0\,dx\,=
 \label{eq:1010}
 \ee
\be
      |Q_1|\int_{\mathbb{R}^n\backslash\Omega_2}
        \hat{w}u_0\,dx\,-
    \int_{\mathbb{R}^n\backslash\Omega_2}
          \varepsilon^2\nabla_x
          M(x/\varepsilon)\cdot\nabla_x(\hat{w}u_0)dx\,+
    \int_{\partial\Omega_2}
         \varepsilon^2 n_i\left(\frac{\partial}{\partial x_i}M(x/\varepsilon)\right)\hat{w}u_0\,dS.
\label{b58}
\ee
Following the pattern of the previous estimates (i.e. again using the boundedness and 
the exponential decay of $u_0$) the second integral on the right hand side of \eqref{b58} 
can be readily bounded by $C\varepsilon b_\varepsilon(w,w)^{1/2}$. For the last integral, 
$$\left|\int_{\partial\Omega_2}
         \varepsilon^2 n_i\left(\frac{\partial}{\partial x_i}M(x/\varepsilon)\right)\hat{w}u_0\,dS\right|\leq 
				C\int_{\partial\Omega_2} \ve^2\left|\nabla_xM(x/\ve)\right|\,\left|\hat w\right|\,dS\,= 
 $$
 $$
C\ve\int_{\partial\Omega_2} \left|\nabla_yM(x/\ve)\right|\,\left|\hat w\right|\,dS\,\leq\, 
C\ve\|\nabla_yM\|_{L^\infty(Q)}\|\hat w\|_{L^2(\partial\Omega_2)}\,\leq\,
   C \varepsilon b_\varepsilon (w,w)^{1/2}. 
   $$
Here we have used the classical property of the $L^\infty$-boundedness of $\nabla_yM$ for $M(y)$ solving 
the Laplace equation \eqref{543}, the boundedness of $|\partial\Omega_2|$, 
continuity of the trace operator from
$H^1(\Omega_2)$ into $L_2(\partial\Omega_2)$, and \eqref{what}.

Hence the last two terms in \eqref{b58} can be estimated as follows:
 \be
    \left|
    \int_{\partial\Omega_2}
         \varepsilon^2 n_i\left(\frac{\partial}{\partial x_i}M(x/\varepsilon)\right)\hat{w}u_0dS-
    \int_{\mathbb{R}^n\backslash\Omega_2}
          \varepsilon^2\nabla_x
          M(x/\varepsilon)\cdot\nabla_x(\hat{w}u_0)dx\right|
    \leq C\varepsilon b_\varepsilon(w,w)^{1/2}.
\label{eq:10101}
 \ee

5. As a result we have
 \be
    A_1(\ve)=
        -\int_{\mathbb{R}^n\backslash\Omega_2} \hat{w }A_{ij}^{\rm hom}u_{0,ij}\,dx
       \,+\,|Q_1|\int_{\mathbb{R}^n\backslash\Omega_2}\hat{w }u_0dx
        +
 \ee
 $$
        +\int_{\partial\Omega_2}\hat{w} n_iA_{ij}^{\rm hom}u_{0,j}dS
        +\hat{{A}}_1(\ve),
 $$
with $\hat{{A}}_1(\ve)$ satisfying \eqref{b32}.
Finally, using equation \eqref {eq:eq5} for $u_0$ 
we obtain \eqref{eq:a10300}.
\end{proof}

\vspace{.1in}

\begin{prop}\label{propa0}
  \be
        A_0(\ve)=\int_{\Omega_0^\varepsilon} w   f^\varepsilon
             dx-
				\left(\beta(\lambda_0)-|Q_1|\lambda_0\right)
				\int_{\mathbb{R}^n\backslash\Omega_2}\hat{w }u_0dx
				+\hat{A}_0(\ve),\ \
         |\hat{A_0}(\ve)|\leq C \varepsilon b_\varepsilon(w,w)^{1/2}.
 \label{eq:estA0}
 \ee

\end{prop}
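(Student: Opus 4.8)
The plan is to compute $A_0(\ve)$ directly on the inclusions $\Omega_0^\varepsilon$, where by \eqref{eq:u3} $U_1^\ve(x)=u_0(x)(1+V(x/\ve))$ and $a_0(\ve)=\varepsilon^2 a_0$, isolate the one main-order term that survives, and bound the rest by $C\varepsilon b_\varepsilon(w,w)^{1/2}$. First I would substitute $\nabla_x U_1^\ve=(1+V(x/\ve))\nabla u_0+\varepsilon^{-1}u_0(\nabla_y V)(x/\ve)$ into the first integral in \eqref{eq:a0}. The term carrying $(1+V(x/\ve))\nabla u_0$ is genuinely of order $\varepsilon^2$: by Cauchy--Schwartz it is bounded by $C\varepsilon^2\|\nabla w\|_{L^2(\Omega_0^\varepsilon)}\|\nabla u_0\|_{L^2(\Omega_0^\varepsilon)}$, and since $b_\varepsilon(w,w)\geq \varepsilon^2 a_0\|\nabla w\|_{L^2(\Omega_0^\varepsilon)}^2$ one has $\|\nabla w\|_{L^2(\Omega_0^\varepsilon)}\leq C\varepsilon^{-1}b_\varepsilon(w,w)^{1/2}$, so (using the smoothness and exponential decay of $u_0$) this term is $O(\varepsilon)\,b_\varepsilon(w,w)^{1/2}$. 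The remaining term, rewritten via $\varepsilon^{-1}(\nabla_y V)(x/\ve)=\varepsilon\nabla_x[V(x/\ve)]$ as $\int_{\Omega_0^\varepsilon}\varepsilon^2 a_0\,u_0\,\nabla w\cdot\nabla_x[V(x/\ve)]\,dx$, does \emph{not} become small and must be integrated by parts.

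Second I would integrate this term by parts over each (smooth) inclusion, moving $\nabla_x$ off $V(x/\ve)$. The cell equation \eqref{eq:V} gives $\varepsilon^2 a_0\,\Delta_x[V(x/\ve)]=-\lambda_0(1+V(x/\ve))$, hence $\varepsilon^2 a_0\,u_0\,\Delta_x[V(x/\ve)]=-\lambda_0 U_1^\ve$ on $\Omega_0^\varepsilon$; the cross term $\varepsilon^2 a_0(\nabla u_0)\cdot\nabla_x[V(x/\ve)]=\varepsilon a_0(\nabla u_0)\cdot(\nabla_y V)(x/\ve)$ is $O(\varepsilon)$ and, again by Cauchy--Schwartz and the decay of $u_0$, contributes $O(\varepsilon)\,b_\varepsilon(w,w)^{1/2}$; and the boundary integrals over the individual inclusion boundaries assemble into $\int_{\partial\Omega_0^\varepsilon}\varepsilon a_0(\partial_\nu V)(x/\ve)\,u_0\,w\,dS$, where $\nu$ is the outward unit normal to $Q_0$ (equivalently to the inclusions), since $\varepsilon^2 a_0\,\partial_i[V(x/\ve)]\,n_i=\varepsilon a_0(\partial_\nu V)(x/\ve)$ on $\partial\Omega_0^\varepsilon$. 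Collecting these, $\int_{\Omega_0^\varepsilon}\varepsilon^2 a_0\nabla w\cdot\nabla U_1^\ve\,dx=\lambda_0\int_{\Omega_0^\varepsilon}w\,U_1^\ve\,dx+\int_{\partial\Omega_0^\varepsilon}\varepsilon a_0(\partial_\nu V)(x/\ve)\,u_0\,w\,dS+O(\varepsilon)\,b_\varepsilon(w,w)^{1/2}$; adding the term $\int_{\Omega_0^\varepsilon}w\,U_1^\ve\,dx$ from \eqref{eq:a0} and using that $(\lambda_0+1)U_1^\ve=f^\varepsilon$ on $\Omega_0^\varepsilon$ by \eqref{eq:deff}, I obtain $A_0(\ve)=\int_{\Omega_0^\varepsilon}w\,f^\varepsilon\,dx+\int_{\partial\Omega_0^\varepsilon}\varepsilon a_0(\partial_\nu V)(x/\ve)\,u_0\,w\,dS+O(\varepsilon)\,b_\varepsilon(w,w)^{1/2}$.

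Third, and this is the crux, I would homogenise the surface integral. Since $V$ solves the smooth elliptic problem \eqref{eq:V} in the smooth domain $Q_0$, $a_0\,\partial_\nu V$ is smooth on $\partial Q_0$, and by the divergence theorem with \eqref{eq:V} and \eqref{betadef} its mean equals $\int_{\partial Q_0}a_0\,\partial_\nu V\,dS_y=\int_{Q_0}a_0\,\Delta_y V\,dy=-\lambda_0(\langle V\rangle+|Q_0|)=-(\beta(\lambda_0)-|Q_1|\lambda_0)=:c_0$. Following the pattern of \eqref{543} and \eqref{eq:G}, I would then represent the periodic surface distribution $a_0(\partial_\nu V)\,dS_{\partial Q_0}$ on $Q$ as $c_0\,dy-\nabla_y\!\cdot\mathcal G(y)\,dy$, where $\mathcal G$ is a bounded $Q$-periodic vector field \emph{continuous across the faces of $Q$} (concretely $\mathcal G=\lambda_0\nabla_y(M_2-M)+a_0(\nabla_y V)\mathbf 1_{Q_0}$, with $\Delta_y M_2=V-\langle V\rangle$ and $M$ as in \eqref{543}: the jump of $\mathbf 1_{Q_0}$ sits on $\partial Q_0$, away from $\partial Q$, while $M_2,M\in W^{1,\infty}$ by elliptic regularity). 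Rescaling this identity cell-by-cell and summing, the cube-face contributions telescope and cancel in the bulk by continuity of $\mathcal G$ and $\hat w u_0$ (the sum converging by the exponential decay of $u_0$), which leaves $\int_{\partial\Omega_0^\varepsilon}\varepsilon a_0(\partial_\nu V)(x/\ve)\,u_0\,w\,dS=c_0\int_{\mathbb R^n\backslash\Omega_2}\hat w\,u_0\,dx+\varepsilon\int_{\mathbb R^n\backslash\Omega_2}\mathcal G(x/\ve)\cdot\nabla(\hat w u_0)\,dx+(\text{boundary-layer terms})$. The middle term is $O(\varepsilon)\,b_\varepsilon(w,w)^{1/2}$ by Cauchy--Schwartz, using $\|\mathcal G\|_{L^\infty}<\infty$, the extension bound $\|\nabla\hat w\|_{L^2}\leq C b_\varepsilon(w,w)^{1/2}$ from \eqref{what}, and the exponential decay of $u_0$ and $\nabla u_0$; substituting the value of $c_0$ then yields \eqref{eq:estA0}.

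The main obstacle is controlling the region within an $O(\varepsilon)$-distance of $\partial\Omega_2$ (where $u_0$ does not decay, the cut inclusions $\tilde\Omega_0^\varepsilon$ sit, and the cube-face cancellation fails), with only an $O(\varepsilon b_\varepsilon(w,w)^{1/2})$ loss rather than the naive $O(\varepsilon^{1/2})$. For this I would use that this boundary layer has volume $O(\varepsilon)$ and surface measure $O(1)$, the scaled trace inequality $\int_{\partial\Omega_0^\varepsilon}|\hat w|^2\,dS\leq C(\varepsilon^{-1}\|\hat w\|_{L^2}^2+\varepsilon\|\nabla\hat w\|_{L^2}^2)$, and the tubular estimate $\|\hat w\|_{L^2(\{\mathrm{dist}(x,\partial\Omega_2)<\varepsilon\})}^2\leq C\varepsilon\|\hat w\|_{H^1(\mathbb R^n)}^2$; together these turn the would-be $\varepsilon^{1/2}$ boundary-layer losses into $O(\varepsilon)\,b_\varepsilon(w,w)^{1/2}$, and also bound the leftover cube-face remainders near $\partial\Omega_2$ and the error in replacing the bulk region $D^\varepsilon$ by $\mathbb R^n\backslash\Omega_2$ in the displayed identity.
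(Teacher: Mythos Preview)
Your proof is correct but takes a genuinely different route from the paper. The paper avoids the surface integral on $\partial\Omega_0^\varepsilon$ entirely by splitting $w=\hat w+z^\varepsilon$ with $z^\varepsilon:=w-\hat w\in H_0^1(\Omega_0^\varepsilon)$: the $\hat w$-part of the gradient term $a_0\int_{\Omega_0^\varepsilon}\varepsilon\,\hat w_{,i}V_{,i}u_0\,dx$ is then immediately $O(\varepsilon)\|\hat w\|_{H^1}$ (since $\|\nabla\hat w\|_{L^2}$ carries no $\varepsilon^{-1}$), while the $z^\varepsilon$-part is integrated by parts with \emph{no} boundary contribution. This leaves only volume integrals like $\int_{\Omega_0^\varepsilon}\hat w\,u_0(1+V)\,dx$, which are converted to integrals over $\mathbb{R}^n\setminus\Omega_2$ by writing $\mu_0(1+V)=(|Q_0|+\langle V\rangle)+\Delta_y W$ and arguing exactly as in step~4 of Proposition~\ref{propa1}. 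The constants $\lambda_0(|Q_0|+\langle V\rangle)=\beta(\lambda_0)-|Q_1|\lambda_0$ then drop out directly.

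Your approach instead integrates by parts with $w$ itself, picks up the surface integral, and homogenizes it via the explicit potential $\mathcal G$ together with a cell-by-cell cube-face telescoping argument and boundary-layer trace estimates. This works (your construction of $\mathcal G$ does satisfy $\nabla_y\cdot\mathcal G=c_0$ classically in $Q_0$ and in $Q_1$, with the correct normal jump $a_0\partial_\nu V$ across $\partial Q_0$, and is $L^\infty$ and continuous across $\partial Q$), and your final-paragraph tools---the scaled trace inequality and the tubular $L^2$ bound---are precisely what is needed to keep the $\partial D^\varepsilon$ cube-face remainder and the $D^\varepsilon\,\triangle\,(\mathbb{R}^n\setminus\Omega_2)$ error at $O(\varepsilon)b_\varepsilon(w,w)^{1/2}$ rather than $O(\varepsilon^{1/2})$. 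The trade-off is that the paper's $\hat w/z^\varepsilon$ splitting is shorter and reuses machinery already set up in Proposition~\ref{propa1}, while your argument is more self-contained but requires building the potential $\mathcal G$ and controlling the staircase boundary $\partial D^\varepsilon$ explicitly.
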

\begin{proof}
Consider the flows associated with $U_1^\ve$  in \eqref{eq:a0},
i.e. let $p_0^\varepsilon(x):=a_0 \varepsilon^2\nabla U_{1}^\ve(x)$, $
x\in \Omega_0^\varepsilon$. Using \eqref{eq:u3}, cf. \eqref{eq:uapr}
and \eqref{Vy},
 \begin{equation}
      (p_0^\varepsilon(x))_i=a_0\varepsilon
         V_{,i}(x/\varepsilon)u_0(x)+a_0\ve^2(r_0^\varepsilon(x,x/\varepsilon))_i,
				\,\, \ \ 
				x\in
        \Omega_0^\varepsilon,  
 \label{eq:p0}
 \ee
where
 \be
         (r_0^\varepsilon(x,y))_i:=u_{0,i}(x)(1+V(y)). 
 \ee

Then 
$A_0(\ve)$, see \eqref{eq:a0}, can be evaluated as follows:
 \be
       A_0(\ve)
        =a_0\int_{\Omega_0^\varepsilon} 
				w_{,i}\varepsilon V_{,i} u_0
                       dx
        +a_0\int_{\Omega_0^\varepsilon} \varepsilon^2 \nabla w\cdot r_0^\varepsilon
                       \,dx \,+\,\int_{\Omega_0^\varepsilon} w 
u_0(1+V)
                       dx.
 \label{eq:om}
 \ee
For the first integral in \eqref{eq:om} decompose $w$ into its extension $\hat w$, see \eqref{what}, and 
$z^\varepsilon:=w-\hat w$ and notice that $z^\varepsilon\in H_0^1(\Omega_0^\varepsilon)$. 
As a result, 
\be
a_0\int_{\Omega_0^\varepsilon}	w_{,i}\varepsilon V_{,i} u_0 \,dx = 
a_0\int_{\Omega_0^\varepsilon}	\hat w_{,i}\varepsilon V_{,i} u_0 \,dx + 
a_0\int_{\Omega_0^\varepsilon}	z^\varepsilon_{,i}\varepsilon V_{,i} u_0 \,dx. 
\label{552}
\ee
Applying Cauchy-Schwartz inequality to the first integral in \eqref{552} and using \eqref{what} we conclude that it is small: 

 \be
         \left|a_0\int_{\Omega_0^\varepsilon}  \hat w_{,i}\varepsilon V_{,i}
u_0
                       dx\right|
         \leq C\,\varepsilon\,\|\hat w\|_{H^1(\mathbb{R}^n)}
\leq C \ve b_\ve(w,w)^{1/2} 
 \ee
(having also used the boundedness of $u_0$ and $V$ and the exponential decay of $u_0$).

For the latter integral in \eqref{552}, 
upon integration by parts and using \eqref{eq:V}, 
$$
a_0\int_{\Omega_0^\varepsilon}	z^\varepsilon_{,i}\varepsilon V_{,i} u_0 dx= 
\lambda_0\int_{\Omega_0^\varepsilon}z^\varepsilon u_0(1+V)- 
a_0\int_{\Omega_0^\varepsilon}	z^\varepsilon \varepsilon V_{,i} u_{0,i} dx. 
$$
The last integral on the right hand side is bounded by $C\ve b_\ve(w,w)^{1/2}$ by 
splitting $z^\varepsilon=w-\hat w$ and using again the Cauchy-Schwartz inequality 
and \eqref{552}. 
Splitting in turn the other integral, 
$$
\lambda_0\int_{\Omega_0^\varepsilon}z^\varepsilon u_0(1+V)\,dx\,= \, 
\lambda_0\int_{\Omega_0^\varepsilon}w u_0(1+V)\,dx\,- \, 
\lambda_0\int_{\Omega_0^\varepsilon}\hat w u_0(1+V)\,dx\,=
$$
$$
\lambda_0
\int_{\Omega_0^\varepsilon}w 
u_0(1+V)\,dx \,
-\,\lambda_0 \left(|Q_0|+\langle V\rangle\right) 
\int_{\mathbb{R}^n\backslash\Omega_2}\hat{w }u_0\,dx\,+\,R_0^\varepsilon(x). 
$$
Here we have used the argument as in \eqref{eq:1010}: denoting by $\mu_0(y)$ the characteristic function of 
the inclusion $Q_0$, similarly to \eqref{543}, 
\[
\mu_0(y)(1+V(y))\,=\, (|Q_0|+\langle V\rangle)\,+\,\Delta_y W(y), \ \ \ y\in Q,
\]
for some $W\in H^2(\square)$, and then proceeding with evaluation of 
$\int_{\Omega_0^\varepsilon}\hat w u_0(1+V)\,dx$ as in \eqref{eq:1010}. 
As a result, 
$  \left|  R_0^\varepsilon(x) \right|\leq C \varepsilon b_\varepsilon(w,w)^{1/2}$. 

Notice finally that the remainder term in \eqref{eq:om} is also small: 
$$
        \left|\int_{\Omega_0^\varepsilon} \varepsilon^2\nabla w\cdot r_0^\varepsilon
                       dx\right|
        \leq C\,\varepsilon\, b_\varepsilon(w,w)^{1/2},
$$
(having again used the boundedness of $u_0$ and $V$ and the exponential decay of $u_0$). 

As a result of the above estimates,  \eqref{eq:om} yields 
\be
A_0(\ve)\,=\,
 \lambda_0
\int_{\Omega_0^\varepsilon}w 
u_0(1+V)\,dx \,
-\,\lambda_0 \left(|Q_0|+\langle V\rangle\right) 
\int_{\mathbb{R}^n\backslash\Omega_2}\hat{w }u_0\,dx\,+
\,\int_{\Omega_0^\varepsilon} w 
u_0(1+V)
                       dx\,+\,
\hat A_0(\ve), 
\label{555}
\ee
with $\hat A_0(\ve)\leq \varepsilon^{1/2}\, b_\varepsilon(w,w)^{1/2}$. 

Finally, using  \eqref{eq:deff} and \eqref{betadef}, we 
observe that  \eqref{555} yields \eqref{eq:estA0}.
\end{proof}

\bigskip
\bigskip

Consider now the integrals $A_2(\ve)$ over the defect domain $\Omega_2$, see \eqref{eq:a12}:
\begin{prop}\label{propa2}
 \be
    A_2(\ve)=
        \int_{\Omega_2}w f^\varepsilon dx
        -\int_{\partial\Omega_2}\hat{w}
        n_ia_2u_{0,i}dS+\hat{A}_2(\ve),
 \label{eq:a20300}
 \ee
where $f^\ve$ is given by \eqref{eq:deff}, and 
 \be
    |\hat{A}_2(\ve)|\leq C\varepsilon^{1/2} b_\varepsilon(w,w)^{1/2}.
\label{b63}
 \ee
\end{prop}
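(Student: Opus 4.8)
The plan is to exploit the fact that inside the defect $\Omega_2$ there is no fast oscillation: by \eqref{eq:u3} one has $U_1^\ve(x)=u_0(x)$ for $x\in\Omega_2$, so \eqref{eq:a12} reduces $A_2(\ve)$ to
\[
A_2(\ve)=\int_{\Omega_2}a_2\nabla w\cdot\nabla u_0\,dx+\int_{\Omega_2}w\,u_0\,dx,
\]
with $u_0$ infinitely smooth up to $\partial\Omega_2$ (being a decaying solution of the constant-coefficient equations \eqref{eq:eq5} outside and \eqref{eq:eq4} inside). First I would replace $w$ by its extension $\hat w$ from \eqref{what} in both integrals. The error is controlled because $w-\hat w$ vanishes outside $\overline{\Omega_0^\ve\cup\tilde Q_0^\ve}$, hence inside $\Omega_2$ it is supported in the ``straddling layer'' $\tilde Q_0^\ve\cap\Omega_2$, a set of measure $O(\ve)$ lying within $\ve$-distance of $\partial\Omega_2$. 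Since $\Omega_2$ carries the order-one weight $a_2$ in $b_\ve$, we have $\|\nabla w\|_{L^2(\Omega_2)}\le Ca_2^{-1/2}b_\ve(w,w)^{1/2}$ and $\|w\|_{L^2(\mathbb R^n)}\le b_\ve(w,w)^{1/2}$, while \eqref{what} and the non-negativity of $b_\ve$ give $\|\hat w\|_{H^1(\mathbb R^n)}\le Cb_\ve(w,w)^{1/2}$; a Cauchy--Schwarz estimate over the $O(\ve)$-measure layer, using the boundedness of $u_0$ and $\nabla u_0$ there, then yields
\[
\Bigl|\int_{\Omega_2}a_2\nabla(w-\hat w)\cdot\nabla u_0\,dx+\int_{\Omega_2}(w-\hat w)u_0\,dx\Bigr|\le C\ve^{1/2}b_\ve(w,w)^{1/2}.
\]

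Next I would integrate by parts in $\int_{\Omega_2}a_2\nabla\hat w\cdot\nabla u_0\,dx$, which is legitimate since $\hat w\in H^1(\Omega_2)$ and $u_0\in C^\infty(\overline{\Omega_2})$: recalling that $n$ is the \emph{interior} unit normal to $\partial\Omega_2$ and using $-a_2\Delta u_0=\lambda_0 u_0$ in $\Omega_2$ from \eqref{eq:eq4}, this equals $\lambda_0\int_{\Omega_2}\hat w\,u_0\,dx-\int_{\partial\Omega_2}\hat w\,n_i a_2 u_{0,i}\,dS$. Combining with the lower-order volume term gives
\[
A_2(\ve)=(\lambda_0+1)\int_{\Omega_2}\hat w\,u_0\,dx-\int_{\partial\Omega_2}\hat w\,n_i a_2 u_{0,i}\,dS+\hat A_2'(\ve),
\]
where $\hat A_2'(\ve)$ collects the substitution error above. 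Finally I would undo the substitution in the remaining volume integral, replacing $\hat w$ by $w$ at the cost of another $O(\ve^{1/2}b_\ve(w,w)^{1/2})$ term (the same layer estimate, now only in $L^2$), and observe that by \eqref{eq:deff} one has $f^\ve=(\lambda_0+1)u_0$ on $\Omega_2\subset\mathbb R^n\setminus\Omega_0^\ve$, so $(\lambda_0+1)\int_{\Omega_2}w\,u_0\,dx=\int_{\Omega_2}w\,f^\ve\,dx$. This produces exactly \eqref{eq:a20300}, with $\hat A_2(\ve)$ the sum of the two substitution errors, hence satisfying \eqref{b63}.

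I do not expect a serious obstacle here: unlike Propositions \ref{propa1} and \ref{propa0} this region is free of fast oscillation, so no correctors, curl-type fields or extension lemmas on a perforated domain are needed. The only point requiring mild care is that the $w\leftrightarrow\hat w$ replacement is genuinely of order $\ve^{1/2}$ and not worse. This is where it matters that the replacement is performed \emph{before} integrating by parts, so that only $L^1$-type estimates of $(w-\hat w)$ against the smooth bounded data $u_0,\nabla u_0$ over an $O(\ve)$-measure set are required, rather than a fine-scale trace inequality on $\partial\Omega_2$; and that the weight of $b_\ve$ on $\Omega_2$ --- including on the straddling layer $\tilde Q_0^\ve\cap\Omega_2$ --- is of order one, so that $b_\ve(w,w)^{1/2}$ controls $\|w\|_{H^1(\Omega_2)}$ (with the boundary inclusions also cut out of $\Omega_2$, as remarked in Section 2, the weight on that layer could degenerate and this step would have to be revisited). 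I note finally that the surviving boundary integral $-\int_{\partial\Omega_2}\hat w\,n_i a_2 u_{0,i}\,dS$ is precisely what will cancel, upon summing the estimates for $\tilde A_0$, $A_0$, $A_1$, $A_2$ and $I_2$, the boundary term $\int_{\partial\Omega_2}\hat w\,n_i A^{\rm hom}_{ij}u_{0,j}\,dS$ of \eqref{eq:a10300}, by virtue of the transmission condition \eqref{316} together with the continuity across $\partial\Omega_2$ of the trace of $\hat w\in H^1(\mathbb R^n)$ --- which is why the extension $\hat w$, rather than $w$ itself, must appear in the statement.
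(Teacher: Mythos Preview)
Your proposal is correct and follows essentially the same approach as the paper: replace $w$ by its extension $\hat w$ (error supported on the $O(\ve)$-measure set $\tilde Q_0^\ve\cap\Omega_2$), integrate by parts using \eqref{eq:eq4}, identify $(\lambda_0+1)u_0=f^\ve$ on $\Omega_2$, and then switch $\hat w$ back to $w$ in the remaining volume term. Your justification of the $\ve^{1/2}$ bound via Cauchy--Schwarz over the thin layer is exactly what the paper means by ``arguing as in Proposition~\ref{proptildea0}'', and your closing remarks on why $\hat w$ must appear in the surface term (for the cancellation via \eqref{316}) are accurate.
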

\begin{proof}
 \be
        A_2(\ve)=\int_{\Omega_2} a_2\nabla w\cdot\nabla U_1^\ve
                       dx
                       +\int_{\Omega_2} w U_1^\ve
                       dx=
 \label{eq:a2}
 \ee
 $$
        \int_{\Omega_2} a_2\nabla \hat{w}\cdot\nabla U_1^\ve
                       dx
                       +\int_{\Omega_2} \hat{w} U_1^\ve
                       dx+\tilde{{A}}_2(\ve),
 $$
where
 $$
     \tilde{{A}}_2(\ve)=\int_{\Omega_2\cap \tilde Q_0^\ve} a_2\nabla (w-\hat{w})\cdot\nabla U_1^\ve
                       dx
                       +\int_{\Omega_2\cap \tilde Q_0^\ve} (w-\hat{w}) U_1^\ve
                       dx.
 $$
 Integrating by parts and recalling \eqref{eq:u3}, we obtain (with the consistent choice
of the normal $n$ to $\partial\Omega_2$ being inward for $\Omega_2$)
 \be
    A_2(\ve)=
        -\int_{\Omega_2} \hat{w }a_2\triangle u_{0}dx
        +\int_{\Omega_2}\hat{w} u_0dx
        -\int_{\partial\Omega_2}\hat{w} n_ia_2u_{0,i}dS+\tilde{{A}}_2(\ve)=
 \ee
$$
       (\lambda_0+1)\int_{\Omega_2}\hat{w} u_0dx
        -\int_{\partial\Omega_2}\hat{w} n_ia_2u_{0,i}dS+\tilde{{A}}_2(\ve)=\int_{\Omega_2}\hat{w} f^\varepsilon dx
        -\int_{\partial\Omega_2}\hat{w} n_ia_2u_{0,i}dS+\tilde{{A}}_2(\ve),
$$
having used \eqref{eq:eq4} and \eqref{eq:deff}. The above expression can be rewritten as
\[
    A_2(\ve)=
        \int_{\Omega_2}w f^\varepsilon dx
        -\int_{\partial\Omega_2}\hat{w}
        n_ia_2u_{0,i}dS+\hat{A}_2(\ve),
\]
where
 $$\hat{A}_2(\ve):=\int_{\Omega_2\cap \tilde Q_0^\ve} a_2\nabla (w-\hat{w})\cdot\nabla U_1^\ve
                       dx
                       +\int_{\Omega_2\cap \tilde Q_0^\ve} (w-\hat{w}) U_1^\ve
                       dx+\int_{\Omega_2\cap \tilde Q_0^\ve}(\hat{w}-w) f^\varepsilon
                       dx.
 $$
Arguing further as in Proposition \ref{proptildea0}, we obtain \eqref{b63}.\end{proof}

\vspace{.1in}

We can now complete the proof of the Lemma \ref{lemmatech} with the aid of the
established Propositions \ref{proptildea0}--\ref{propa2} as follows.

Combine \eqref{eq:a20300} with  \eqref{eq:tilde}, \eqref{eq:a10300}, \eqref{eq:estA0}  which
are all substituted into \eqref{i1four}, and then employ \eqref{limsystem}. As a result, 
$I_1(\ve)$, see\eqref{eq:i1}, is evaluated as follows:
 \be
    I_1(\ve)=b_\varepsilon(w,U_1^\ve)=\int_{\Omega_2\bigcup\Omega_0^\varepsilon}w f^\varepsilon
    dx+(\lambda_0+1)|Q_1|\int_{\mathbb{R}^n\backslash \Omega_2}\hat{w} u_0
    dx+\hat{I}_1(\ve),
 \label{eq: esti1}
 \ee
where
 \be
    |\hat{I}_1(\ve)|\leq C\varepsilon^{1/2} b_\varepsilon(w,w)^{1/2}.
 \ee

Let us now consider $I_2(\ve)$, see \eqref{eq:500}. Noticing that the last term in the
right hand side of \eqref{eq:500} is a constant times \eqref{eq:1000}, we can
employ again \eqref{eq:1010}-\eqref{eq:10101} which results in:
 \be
    \int_{\Omega_1^\varepsilon }w
     u_0dx=|Q_1|
     \int_{\mathbb{R}^n\backslash\Omega_2}
          \hat{w}u_0dx+R^\ve,
 \label{eq:400}
 \ee
where
 \be
    |R^\ve|\leq C\varepsilon b_\varepsilon(w,w)^{1/2}.
 \ee
Notice next that the integral over $\tilde{\Omega}_0^\varepsilon$ in \eqref{eq:500} is
small due to the smallness of the measure of
$\tilde{\Omega}_0^\varepsilon$:
 \be
    \left|\int_{\tilde{\Omega}_0^\varepsilon}w u_0dx\right|
    \leq C\varepsilon^{1/2} b_\varepsilon(w,w)^{1/2}.
\label{hzchb}
 \ee

As a result of employing \eqref{eq:400}--\eqref{hzchb} in \eqref{eq:500} it
 can be rewritten in the following form:
 \be
    I_2(\ve)=
     -(\lambda_0+1)
    \biggl(\int_{\Omega_0^\varepsilon} w u_0(1+V)dx
    +\int_{\Omega_2} w u_0dx
    +|Q_1|
     \int_{\mathbb{R}^n\backslash\Omega_2}
          \hat{w}u_0dx\biggr)+\hat{I}_2(\ve),
 \label{eq:510}
 \ee
where
 \be
    |\hat{I}_2(\ve)|\leq C\varepsilon^{1/2} b_\varepsilon(w,w)^{1/2}.
 \ee
 Adding finally \eqref{eq:510} and
\eqref{eq: esti1} and using \eqref{eq:deff}, we conclude that \eqref{eq:est6} can be bounded as
follows:
 \be
        |b_\varepsilon(w,U_1^\ve-\tilde{u}^\varepsilon)|
        \leq C\varepsilon^{1/2} b_\varepsilon(w,w)^{1/2},
 \ee
which is identical to \eqref{lemmatech1} and hence proves Lemma
\ref{lemmatech}. $\square$

\section{An example}

Straightforward analysis of the limit problem
(\ref{eq:eq4})--(\ref{316}) shows that one can sometimes explicitly calculate isolated
eigenvalues of operator $A_0$ and associated eigenfunctions, at least in the case when 
$\Omega_2=\{x: |x|<R\}$ (i.e. the defect is a ball of some radius $R>0$) and
$A^{\rm hom}=a^{\rm hom}I$,  i.e. the perforated homogenized matrix is isotropic. The latter 
isotropy is the case e.g. when  the microscopic periodic inclusions $Q_0$ have appropriate symmetries, in particular being themselves 
balls in which case $\beta(\lambda)$ is in fact explicitly found in terms of Bessel functions and is in 
particular a simple trigonometric function for $n=3$, see \cite{BKS08}. 
We sketch the details below.

Under the above assumptions a solution to the spectral problem
(\ref{eq:eq4})--(\ref{316}) is sought by separation of variables in the
spherical coordinates $x=(r,\omega)$, $r:=|x|$, $\omega:=x/|x|\in S^{n-1}$, 
the unit sphere in ${\mathbb R}^n$ ($n\geq 2$):
\begin{equation}
u_0(x)\,=\,
\left\{
      \begin{array}{ll}
         r^{-\,(n-2)/2}J_m\left((\lambda/a_2)^{1/2}\,r\right)P_m(\omega), & |x|\leq R,
          \\
         \alpha\, r^{-\,(n-2)/2}I_m\left(|\beta(\lambda)/a^{\rm hom}|^{1/2}\,r\right)P_m(\omega), & |x|\geq R,
      \end{array} \right.,
\label{spher}
\end{equation}
assuming $\lambda>0$,  $\beta(\lambda)<0$. Parameters $m$, $\lambda$ and $\alpha$ are to be found. 
Here $J_m(z)$ and $I_m(z)$ are the Bessel and the modified Bessel functions respectively,
see e.g. \cite{AbrSt},
$P_m(\omega)$ are the spherical functions which are the eigenfunctions
of the Laplace-Beltrami operator $\Delta_\omega$ on $S^{n-1}$:
\begin{equation}
\left(\Delta_\omega\,+\,m^2-(n-2)^2/4\right)\,P_m(\omega)\,=\,0.
\label{LB}
\end{equation}

The spherical spectral problem (\ref{LB}) is a classical one and
defines explicit eigenvalues $m$ and eigenfunctions $P_m$ (e.g. for
$n=3$ the spectral parameter $m$ is half-integer and $P_m$, in spherical 
coordinates $(\theta, \varphi)$, are
products of the Legendre polynomials of $\cos\theta$ and trigonometric functions of $\varphi$; 
for $n=2$ those are trigonometric functions and $m$ is integer).
Having selected $m$ and associated $P_m(\omega)$ the function
$u_0(x)$ determined by (\ref{spher}) automatically satisfies the
equations (\ref{eq:eq4}) and (\ref{eq:eq5}), and exponentially
decays at infinity ($r\to\infty$) with the rate
$\exp\left(-|\beta(\lambda)/a^{\rm hom}|^{1/2}r\right)$, due to the exponential decay of $I_m(z)$. 
The remaining
parameters $\alpha$ and $\lambda$ (the eigenvalue) are determined
from (\ref{316}) at $r=R$, which specializes to:
\begin{equation}
J_m\left((\lambda/a_2)^{1/2}\,R\right)\,=\,\alpha\,I_m\left(|\beta(\lambda)/a^{\rm hom}|^{1/2}\,R\right)
\label{intcond1}
\end{equation}
\begin{equation}
(\lambda a_2)^{1/2}
J'_m\left((\lambda/a_2)^{1/2}R\right)-a_2\frac{n-2}{ 2 R}J_m\left((\lambda/a_2)^{1/2}R\right)
\,=\,
\label{intcond2}
\end{equation}
\[
=\,
\alpha \left|\beta(\lambda)a^{\rm hom}\right|^{1/2}
I'_m\left(|\beta(\lambda)/a^{\rm hom}|^{1/2}\,R\right)-\alpha a^{\rm hom}\frac{n-2}{ 2 R}
I_m\left(|\beta(\lambda)/a^{\rm hom}|^{1/2}\,R\right),
\]
where $J'_m$ and $I'_m$ denote derivatives of the relevant (modified) Bessel functions.

All $\lambda$ with $\beta(\lambda)<0$ for which there exists a solution to (\ref{LB}),
(\ref{intcond1})--(\ref{intcond2}) describe the point
spectrum of the operator $A_0$ in the ``gaps''. One can see that it
is generally non-empty. Say, for $n=3$ and $m=1/2$, $P_{1/2}(\omega)\equiv 1$ is an
eigenfunction of (\ref{LB}) (which determines the spherically symmetric solutions via 
(\ref{spher}) ), and $J_{1/2}$ and $I_{1/2}$ are represented by explicit trigonometric
and exponential functions respectively, e.g. \cite{AbrSt}. 
This allows replacing (up to insignificant multiplicative constants) the radial parts in the
right hand sides of \eqref{spher} by $r^{-1}\sin((\lambda/a_2)^{1/2}r)$ and
$r^{-1}\exp(-|\beta(\lambda)/a^{\rm hom}|^{1/2}r)$, transforming
 (\ref{intcond1})--(\ref{intcond2}) into:
\begin{equation}
\sin\left((\lambda/a_2)^{1/2}\,R\right)\,=\,\alpha\,\exp\left(-|\beta(\lambda)/a^{\rm hom}|^{1/2}\,R\right)
\label{intcond12}
\end{equation}
\[
(\lambda a_2)^{1/2}
\cos\left((\lambda/ a_2)^{1/2}R\right)
-\frac{a_2}{ R}\sin\left((\lambda/ a_2)^{1/2}R\right)
\,=
\]
\begin{equation}
-\alpha\, a^{\rm hom}
\left[|\beta(\lambda)/a^{\rm hom}|^{1/2}+\frac{1}{ R}\right]
\exp\left(-|\beta(\lambda)/a^{\rm hom}|^{1/2}R\right).
\label{intcond22}
\end{equation}
The condition of solvability of (\ref{intcond12})--(\ref{intcond22}) obviously reads:
\begin{equation}
\mbox{cotan}\left((\lambda/a_2)^{1/2}\,R\right)
+\frac{a^{\rm hom}-a_2}{(\lambda a_2)^{1/2}R}
\,=\,
-\,
\left(\frac{a^{\rm hom}|\beta(\lambda)|}{\lambda a_2}\right)^{1/2},\,\,\,\beta(\lambda)<0.
\label{radsymm}
\end{equation}
Noticing that the left hand side of \eqref{radsymm} is a function of $\lambda^{1/2}R$
one can easily see that
by varying $R>0$ one can obtain infinitely many solutions of (\ref{radsymm})
at any $\lambda$ in any gap ($\beta(\lambda)<0$)
 of the unperturbed linear operator.

Remark finally that, when $Q_0=\left\{y\in Q :\, |y-y_0|\leq \rho\,\right\}$ i.e. is a ball of some radius $\rho<1/2$, 
$\beta(\lambda)$ is itself explicitly found in terms of a simple radially symmetric solution of 
\eqref{eq:V} and then \eqref{betadef}, see \cite{BKS08}  p.419 Remark 1. In particular, 
for $n=3$,  
\be
\beta(\lambda)\,=\,\left(1\,-\,\frac{4}{3}\pi \rho^3\right)\lambda\,+\,
4\pi\rho\left(1\,-\,\rho\lambda^{1/2}\mbox{cotan}\left(\lambda^{1/2}\rho\right)\right).
\label{betaexplicit}
\ee

\section{Discussion: further refinement of the results.}

In this section we describe a 
further refinement of
the results formulated in Theorem \ref{thm1} using the results of Cherdantsev \cite{cherd}, who 
studied a similar problem applying an alternative 
technique of the 
two-scale convergence \cite{Ng, Al, Zh1, Zh2} and who followed in turn a general strategy of 
Zhikov \cite{Zh1,Zh2}. 
 Remark that, apart from the following important strengthening 
of the results 
(subject to an additional restriction on the boundary inclusions as below), the approach of \cite{cherd} 
potentially allows less regular boundaries for both the periodic inclusions $Q_0$ and for the 
defect $\Omega_2$. It is also, in principle, extendable for (periodically) variable $a_j(y)$, $j=0,1$,  
as well as variable $a_2(x)$ in the defect with some minimal assumptions on the regularity. Its 
disadvantage however is in the intrinsic inability of the method of two-scale convergence to provide 
error bounds, i.e. the rate of convergence as we do in Theorem \ref{thm1}, see \eqref{mainresult} and 
\eqref{eq28}. It may therefore be advantageous to combine the results based on the two methods, 
as discussed below.

For the results of \cite{cherd} to be applicable to our setting, the only but essential additional assumption which 
has to be made is restricting further our assumption \eqref{tildea0bounds} on the boundary inclusions. 
Namely, again not pursuing here the maximal generality, following formula (2.5) of \cite{cherd}  we now require 
\be
    \hat A_0\ve^{2-\theta}\,\leq\,\tilde
    a_0(\varepsilon)\,\leq\,\tilde A_0 \label{tildea0boundsRefined}, 
 \ee
where $\hat A_0$ and $\tilde A_0$ are positive constants and $0<\theta\leq 2$. 

The restriction \eqref{tildea0boundsRefined} plays an important role, which may be interpreted as that of excluding additional modes which may in principle be localized near the boundary inclusions for small 
$\ve$ and which cannot be accounted for by the limit two-scale problem. The latter possibility was essentially 
excluded by the analysis of Cherdantsev in \cite{cherd}, which has not only established a version 
of the strong two-scale resolvent convergence but also established a key property of two-scale spectral 
compactness (Theorem 6.1 of \cite{cherd}). Namely, if $u^\ve$ is a sequence of normalized eigenvalues 
of the original operator $A_\ve$, see \eqref{eq:eq1new}, with associated eigenvalues 
$\lambda(\ve)\to\lambda_0$, then up to a subsequence $u^\ve$ strongly two-scale converges to a 
(non-zero) $u^{(0)}(x,y)$ which is an eigenfunction of the limit two-scale operator $A_0$ for which 
$\lambda_0$ is the eigenvalue. (We emphasize that 
for establishing this key result Cherdantsev followed again the general strategy of Zhikov, which 
however required for him to develop a key novel technical ingredient of a uniform exponential decay 
of the eigenfunctions of $A_\ve$, see Theorem 3.1 of \cite{cherd}.) 

The above result of \cite{cherd}, when applied to our setting, immediately implies the following. 
Given $\lambda_0$ an isolated 
eigenvalue of $A_0$ of a finite multiplicity $m$ with $\beta(\lambda_0)<0$ and 
$\lambda_0\neq\lambda_j$, $j\geq 0$, for small enough $\ve$ there will be exactly $m$ eigenvalues of 
$A_\ve$ (counted with their multiplicities) near $\lambda_0$. (Notice that this is formally valid for 
$m=0$ as well: if $\lambda_0$ is {\it not} an eigenvalue then, for small $\ve$, there are no eigenvalues of 
$A_\ve$ near $\lambda_0$.) 
 This in turn implies that in \eqref{eq28}, 
for small enough $\ve$, $J_\ve=m$, which can be interpreted as an $\ve^{1/2}$-error bound not only 
on the eigenvalues but also on the eigenfunctions. 
We formulate this refined result as the following theorem. 

\begin{theorem}\label{refinedtheorem}
Let all the assumptions of Theorem \ref{thm1} be satisfied, and let additionally the 
assumption \eqref{tildea0bounds} be strengthened to \eqref{tildea0boundsRefined}, and let 
$\lambda_0$ be an isolated eigenvalue of the limit operator $A_0$ of a finite multiplicity $m\geq 0$. 
Then there exists $\ve_0>0$,  and 
constants $\delta>0$ and $C_1>0$ independent of $\varepsilon$  such that
for any $0<\ve\leq \ve_0$ 
there exist, within the $\delta$-neighbourhood of $\lambda_0$ ($\{\lambda:\, |\lambda-\lambda_0|<\delta\}$), exactly $m$ 
eigenvalues $\lambda^{(j)}(\varepsilon)$ of operator $A_\varepsilon$ counted with their multiplicities, 
such that
\be
|\lambda^{(j)}(\varepsilon)-\lambda_0|\,\leq\,C_1\varepsilon^{1/2}, \ \ \ j=1,...,m.
\label{mainresultref}
\end{equation}

Moreover if $u^0(x,y)=(u_0(x),v(x,y))$ is an eigenfunction of $A_0$ which corresponds to $\lambda_0$ then  
there exist constants $c_j$, $j=1,...,m$, such that
\begin{equation}
\left\| u^{0}(x,x/\ve)-\sum_{j=1}^m 
c_ju_j^\varepsilon\right\|_{L_2(\mathbb
R^n)}\,\leq\,C_2\varepsilon^{1/2}, \label{eq28ref}
\end{equation}  
where $u^\varepsilon_j(x)$ are $L_2$-normalized eigenfunctions associated with $\lambda^{(j)}(\ve)$, and the
constant $C_2$ is  independent of $\varepsilon$.
\end{theorem}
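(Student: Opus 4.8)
The plan is to combine the already-established error bounds of Theorem~\ref{thm1} (equivalently Lemma~\ref{l:general}) with the two-scale spectral compactness result of Cherdantsev \cite{cherd}, i.e. Theorems 3.1 and 6.1 therein, the latter being applicable precisely because the refined hypothesis \eqref{tildea0boundsRefined} is now in force. The strengthened assumption on $\tilde a_0(\ve)$ is what rules out spurious boundary-layer modes near $\partial\Omega_2$, so that every sequence of normalized eigenfunctions of $A_\ve$ with eigenvalues converging to $\lambda_0$ must, up to a subsequence, strongly two-scale converge to a genuine eigenfunction of the limit operator $A_0$ at $\lambda_0$.

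First I would record the ``upper'' half: from Lemma~\ref{l:general}(ii)--(iii), applied to each of $m$ (suitably $B_0$-orthogonalized) eigenfunctions $u^0_j$ of $A_0$ at $\lambda_0$ as indicated in the Remark following Theorem~\ref{thm1}, one obtains at least $m$ eigenvalues $\lambda^{(j)}(\ve)$ of $A_\ve$ within $C_1\ve^{1/2}$ of $\lambda_0$, together with the approximation estimate \eqref{eq:genestim} for the $U_\ve u^0_j$; this already yields \eqref{mainresultref} and, via $\|U_\ve u^0-u^0(x,x/\ve)\|_{L_2}=O(\ve^{1/2})$ (which follows from \eqref{eq:u3new}-type bounds, boundedness of $V$ and exponential decay of $u_0$), a preliminary version of \eqref{eq28ref}. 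Next I would prove the matching ``lower'' bound: fix $\delta>0$ small enough that the only spectrum of $A_0$ in $[\lambda_0-\delta,\lambda_0+\delta]$ is $\lambda_0$ itself with its multiplicity $m$ (possible since $\lambda_0$ is isolated of finite multiplicity, cf. \cite{cherd} \S7), and shrink $\delta$ if needed to avoid $\sigma_{\rm ess}(A_\ve)$ by \eqref{noess}. Suppose, for contradiction, that along some sequence $\ve_k\to0$ the operator $A_{\ve_k}$ has at least $m+1$ eigenvalues (with multiplicity) in $(\lambda_0-\delta,\lambda_0+\delta)$; pick corresponding orthonormal eigenfunctions $u^{\ve_k}_1,\dots,u^{\ve_k}_{m+1}$. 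By Cherdantsev's uniform exponential decay (Theorem 3.1 of \cite{cherd}) these are tight, hence two-scale precompact, and by the two-scale compactness (Theorem 6.1 of \cite{cherd}) each subsequential two-scale limit is an eigenfunction of $A_0$ at an eigenvalue in $[\lambda_0-\delta,\lambda_0+\delta]$, i.e. at $\lambda_0$. One then checks that strong two-scale convergence preserves orthonormality in the limit (the two-scale limit inner product being the $L_2(\mathbb{R}^n\times Q)$ one on $\mathcal H_0$), producing $m+1$ orthonormal eigenvectors of $A_0$ at $\lambda_0$ — contradicting $\dim\ker(A_0-\lambda_0)=m$. Therefore, for $\ve\le\ve_0$, $A_\ve$ has \emph{at most} $m$ eigenvalues in the $\delta$-neighbourhood, and combined with the lower bound, exactly $m$.

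Finally, knowing that $J_\ve$ in \eqref{eq28} consists of exactly those $m$ eigenvalues (all within $C_1\ve^{1/2}$ of $\lambda_0$, hence certainly within the $C\ve^{1/2}$-window once $\ve_0$ is small), I would reread \eqref{eq28} with $|J_\ve|=m$: the right-hand side is still $C_2\ve^{1/2}$, and replacing $U_\ve u^0$ by $u^0(x,x/\ve)$ costs only an extra $O(\ve^{1/2})$ as above, giving \eqref{eq28ref} with the $c_j:=c_j(\ve)$ supplied by Lemma~\ref{l:general}(iii) (constant $m$, so no $\ve$-dependence in the index set). For $m\ge2$ one invokes the orthogonalization device of the Remark after Theorem~\ref{thm1} to make the $U_\ve u^0_j$ approximately orthogonal in $L_2(\mathbb{R}^n)$, which legitimizes counting $m$ distinct eigenvalues with multiplicity; for $m=0$ the statement is the vacuous/``no nearby eigenvalues'' case, immediate from the contradiction argument with ``$m+1$'' replaced by ``$1$''.

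The main obstacle is the lower bound, i.e. transferring Cherdantsev's two-scale compactness into the clean ``at most $m$ eigenvalues'' count: one must be careful that the restricted class \eqref{tildea0boundsRefined} genuinely matches the hypotheses under which Theorems 3.1 and 6.1 of \cite{cherd} are proved (the uniform exponential decay is what controls the boundary-layer inclusions), and that strong two-scale convergence — not merely weak — is what is available, so that orthonormality passes to the limit. Everything else (the upper bound, the passage from $U_\ve u^0$ to $u^0(x,x/\ve)$, the orthogonalization for $m\ge2$) is routine given the machinery already developed in Sections 4--6.
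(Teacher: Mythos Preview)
Your proposal is correct and follows essentially the same approach as the paper. The paper does not give a formal proof of Theorem~\ref{refinedtheorem}; it is stated as a direct consequence of combining Theorem~\ref{thm1} (and the multiplicity Remark following it) with Cherdantsev's two-scale spectral compactness (\cite{cherd}, Theorems~3.1 and~6.1), which is precisely the combination you outline --- including the observation that $|J_\ve|=m$ for small $\ve$ so that \eqref{eq28} yields \eqref{eq28ref}. Your contradiction argument for the ``at most $m$'' bound (orthonormality passing to the limit under strong two-scale convergence) and the $O(\ve^{1/2})$ cost of replacing $U_\ve u^0$ by $u^0(x,x/\ve)$ are the natural details the paper leaves implicit.
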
 

In particular, if $m=1$ i.e. $\lambda_0$ is a simple isolated eigenvalue of $A_0$, then 
\begin{equation} 
\left\Vert
u^\varepsilon(x)-u^0(x,x/\varepsilon)\right\Vert_{L^2({\mathbb R}^n}\,\leq\,C\,\varepsilon^{1/2}, \ \ 
\left|\lambda(\varepsilon)-\lambda_0\right|\,\leq\,C_1\varepsilon^{1/2}, 
\label{conveigfns}
\end{equation}
i.e. there holds an $\ve^{1/2}$-order $L^2$-error bound between the exact and the approximate eigenfunctions,  
as well as the eigenvalues. 


Finally, let us remark that we expect the conditions on the regularity of the
boundaries of both the periodic inclusion $Q_0$ and the defect $\Omega_2$ could also be relaxed.  
Indeed, while assuming the $C^\infty$-smoothness of the above domain,  the presented argument has only used 
up to $C^2$-regularity. Furthermore, in principle, the argument could be refined to relax the 
regularity restrictions further, 
cf. e.g. \cite{Zh05}, \cite{JKO} \S 8.3.

\appendix

\section{Formal derivation of the limit problem
(\ref{330})--(\ref{limsystem}).}
\setcounter{equation}{0}
\renewcommand{\theequation}{\mbox{\thesection.\arabic{equation}}}

This appendix formally derives the limit equations
\eqref{330}--\eqref{limsystem} via two-scale asymptotic expansions
in the form \eqref{ansatz1}. It also establishes the structure of
the 
corrector terms in \eqref{ansatz1} as
required for subsequent rigorous justification.

The asymptotic solution to \eqref{330}--\eqref{limsystem} is sought in the form
of two-scale ansatz \eqref{ansatz1}--\eqref{ansatz2}, where
$u^{(0)}(x,y)$, $u^{(1)}(x,y)$ and $u^{(2)}(x,y)$ are assumed to depend
periodically on the ``fast'' variable $y$ only outside the defect $\Omega_2$.
The exact solution $u^\ve(x)$ satisfies the standard continuity conditions at
the boundary $\partial \Omega_0^\ve$ of the small inclusions away from the defect:
\be
\left(u^\ve(x)\right)_0\,=\,\left(u^\ve(x)\right)_1, \,\,\,x\in \partial \Omega_0^\ve,
\label{ucont}
\ee
and
\be
\left(a_0\ve^2\frac{\partial u^\ve}{\partial n}(x)\right)_0\,=\,
\left(a_1\frac{\partial u^\ve}{\partial n}(x)\right)_1,  \,\,\,x\in \partial \Omega_0^\ve,
\label{ducont}
\ee
where the subscripts ``0'' and ``1'' denote the limit values at the boundary
evaluated in $\Omega_0^\ve$ and $\Omega_1^\ve$, respectively; $n$ stands for
unit normal to $\partial \Omega_0^\ve$ which we select as outward for the matrix phase
$\Omega_1^\ve$ (and hence inward for the inclusions $\Omega_0^\ve$).
Similar boundary conditions are also satisfied at the boundaries of the defect
$\partial\Omega_2$ and of the ``boundary layer'' inclusions $\tilde\Omega_0^\ve$.

The ansatz \eqref{ansatz1}--\eqref{ansatz2} is first substituted into the equation
\eqref{eq:eq1}--\eqref{eq:eq1new} and the interface conditions 
\eqref{ucont}-\eqref{ducont}, away from the defect.

By equating first the terms of order $\ve^{-2}$ in \eqref{eq:eq1} and
of order $\ve^{-1}$ in \eqref{ducont},
\[
-\nabla_y\cdot a_1\nabla_yu^{(0)}(x,y)=0, \  \,\, y \in  Q_1; \,\,\,
a_1\frac{\partial}{\partial n_y}u^{(0)}(x,y)=0, \,\,y \in \partial
Q_0,
\]
with $n$ being the outward unit normal for $Q_1$, and $\partial/\partial n_y:=n\cdot\nabla_y$ is the 
normal derivative ``in $y$''.

This is a homogeneous Neumann problem in $Q_1$ with periodic  boundary conditions,
whose solution is an arbitrary constant in $Q_1$ (i.e. is independent of $y$) which
implies \eqref{u0}. The balance of the terms of order $\ve^0$ in \eqref{ucont}
implies (cf. \eqref{332})
\be
v(x,y)=0,\,\,\, \ \ y\in \partial Q_0.
\label{vbc}
\ee

Equating next the terms of order $\ve^{-1}$ in \eqref{eq:eq1} and
of order $\ve^{0}$ in \eqref{ducont}, we arrive at:
\[
-\nabla_y\cdot a_1\nabla_yu^{(1)}(x,y)=0, \,\, y \in  Q_1; \,\,\, \ \ 
a_1\frac{\partial}{\partial
n_y}u^{(1)}(x,y)\,=\,-\,a_1\frac{\partial}{\partial n_x}u_0(x),
 \,\,y\in \partial Q_0,
\]
$\partial/\partial n_x:=n\cdot\nabla_x$, 
together with the periodicity conditions in $y$. This is a standard
corrector problem for ``soft'' inclusions (or perforated) periodic
domains. As a result, \be u^{(1)}(x,y)\,=\,N_j(y)\frac{\partial
u_0(x)}{\partial x_j}, \label{u1} \ee where $N_j$ is the solution
of the ``soft'' unit cell problem with periodic boundary
conditions (e.g. \cite{JKO} \S 3.1):
 \be
    a_1\triangle N_j=0,\ y\in  Q_1;\ \ \
    a_1\frac{\partial}{\partial n_y} N_j\,=\,-\,a_1n_j, \,y\in  \partial Q_0.
 \label{eq:N}
 \ee
It is convenient for some subsequent analysis to view the functions $N_j(y)$ as defined in the whole of
the periodicity cell $Q$: with this aim they are extended into the inclusion domain
$Q_0$  by, for example, harmonic continuation: $\triangle N_j=0,\ \text{in}\  Q_0$,
$N_j\in C(Q)$. The above procedure determines $N_j$ up to a constant which can be 
specified by the condition that the average of $N_j$ over $Q$ is zero:
$\langle N_j(y)\rangle_{y}=0$.

Equate now the terms of order $\ve^{0}$ in
\eqref{eq:eq1}. As a result, in $Q_0$, \be
-a_0\Delta_yv(x,y)\,=\,\lambda_0(u_0(x) + v(x,y)), \label{u20} \ee
which together with \eqref{vbc} fully recovers (\ref{332}). In
particular, 
assuming $\lambda_0\neq\lambda_j$, $j\geq 1$, i.e. $\lambda_0$ is not a Dirichlet 
eigenvalue of $-\Delta_y$ in $Q_0$, 
this implies that $v(x,y)$ can be uniquely presented in the form \eqref{Vy}, 
$v(x,y)=u_0(x)V(y)$, where 
where $V$ is a solution of \eqref{eq:V}, where it is further assumed that $V$ is extended 
by zero to $Q$ and is then periodically extended on the whole $\mathbb{R}^n$.

In turn, in $Q_1$, taking into account \eqref{u0} and \eqref{u1}, the balance of order $\ve^0$ terms 
in \eqref{eq:eq1} yields:
 \be
     -a_1\Delta_yu^{(2)}(x,y)\,=\, 2a_1N_{j,k}(y)u_{0,jk}(x)\,
    +\,a_1\Delta_xu_0(x)\,+\,\lambda_0u_0(x), \,\, y\in  Q_1.
 \label{u21}
 \ee
(Henceforth comma in subscript denotes differentiation with respect to variables with 
following indices.) 

 This equation has to be supplemented by boundary conditions which
result from equating in \eqref{ducont} the terms of order $\ve^1$,
yielding 
 \be
    a_1\frac{\partial}{\partial
    n_y}u^{(2)}\,=\,-\,a_1\,N_j\frac{\partial}{\partial n_x}u_{0,j}
    +a_0\frac{\partial}{\partial n_y}v, \ \ \ y\in\partial Q_0.
 \label{u2bc} \ee

Treating \eqref{u21}--\eqref{u2bc} as boundary value problem for
$u^{(2)}$ in $y$ for any fixed $x$, the Green's formula together with the
periodicity boundary conditions in $y$ imply:
\[
a_1\int_{Q_1}\Delta_yu^{(2)}= a_1\int_{\partial
Q_0}\frac{\partial}{\partial n_y}u^{(2)},
\]
which yields:
 \[
    \left(-a_1\,\Delta_xu_0(x)-\lambda_0u_0(x)\right)|Q_1|-2a_1u_{0,jk}(x)\int_{Q_1}N_{j,k}(y)dy=
 \]
 \[ \int_{\partial Q_0} \left(-a_1\,N_j(y)u_{0,jk}(x)n_k(y)
    +a_0\frac{\partial}{\partial n_y}v(x,y)\right)dy.
\]
Applying the integration by parts to the right hand side surface integrals and
using \eqref{u20} we arrive at \eqref{331} where
 \be
    A^{\rm hom}_{ij}:=\biggl\langle\mu(y) a_1\biggl(\delta_{ij}+
        N_{j,i}(y)\biggr)\biggr\rangle_{y},
 \label{eq:A}
 \ee
and $\mu(y)$ is the characteristic function of $Q_1$ ($\mu(y)=1$, $y\in Q_1$;
$\mu(y)=0$, $y\in Q_0$). This is a well-known representation of the entries
of the homogenized matrix $A^{\rm hom}$ in a perforated domain (with ``holes'' in soft inclusions $Q_0$), equivalent to
\eqref{eq:ahom}, see e.g. \cite{JKO} \S 3.1. Hence the limit equation
\eqref{331} is recovered.

To complete the formal derivation of the limit problem \eqref{330}--(\ref{limsystem})
the natural equation \eqref{330} is simply postulated within the
(homogeneous) defect $\Omega_2$. The limit interface conditions \eqref{limsystem}
at the boundary of the defect are also postulated: they have the meaning of the
continuity of the fields and the flows ``to main order'' and the proof of the Theorem
\ref{l:estim} ensures a posteriori that those produce a controllably small boundary layer,
ultimately ensuring the main  result \eqref{mainresult} of the paper.

\subsection*{Acknowledgements}
A preliminary version of this work was completed back in 2006, and was then 
supported by Bath Institute for Complex Systems 
(BICS, EPSRC grant GR/S86525/01). 
The present manuscript represent a substantially refined and updated version of the one 
initially published in the BICS preprints series (2006). 
The authors have been grateful to Vladimir 
Kamotski, R.R. Gadyl'shin, V.V. Zhikov for discussions and useful comments. Thanks 
are also due to D.M. Bird for stimulating discussions on some physical 
motivations.

\end{document}